\newcommand{\ie}{{i.e.}\ }
\newcommand{\cf}{{cf.}\ }
\newcommand{\ko}{\: , \;}
\numberwithin{equation}{subsection}
\newtheorem{classification-theorem}[subsection]{Classification Theorem}
\newtheorem{decomposition-theorem}[subsection]{Decomposition Theorem}
\newtheorem{proposition-definition}[subsection]{Proposition-Definition}
\newtheorem{periodicity-conjecture}[subsection]{Periodicity Conjecture}
\newtheorem{theorem}{Theorem}
\numberwithin{theorem}{subsection}
\newtheorem{thmx}{Theorem}
\newtheorem{lemma}[theorem]{Lemma}
\newtheorem{proposition}[theorem]{Proposition}
\newtheorem{corollary}[theorem]{Corollary}
\newtheorem{corx}[thmx]{Corollary}
\theoremstyle{definition}
\theoremstyle{plain}
\newtheorem{remarks}[theorem]{Remarks}
\newcommand{\reminder}[1]{}
\renewcommand{\mod}{\mathrm{mod}\,}
\newcommand{\sg}{\mathrm{sg}\,}
\newcommand{\Gproj}{\mathrm{Gproj}\,}
\newcommand{\per}{\mathrm{per}\,}
\newcommand{\pvd}{\mathrm{pvd}\,}
\newcommand{\op}{^{op}}
\newcommand{\HH}{\mathrm{HH}}
\newcommand{\HC}{\mathrm{HC}}
\newcommand{\HN}{\mathrm{HN}}
\newcommand{\Z}{\mathbb{Z}}
\newcommand{\iso}{\xrightarrow{_\sim}}
\newcommand{\liso}{\xleftarrow{_\sim}}
\newcommand{\id}{\mathbf{1}}
\newcommand{\Hom}{\mathrm{Hom}}
\newcommand{\RHom}{\mathrm{RHom}}
\newcommand{\ten}{\otimes}
\newcommand{\lten}{\overset{\mathrm{L}}{\ten}}
\newcommand{\ca}{{\mathcal A}}
\newcommand{\cb}{{\mathcal B}}
\newcommand{\cc}{{\mathcal C}}
\newcommand{\cd}{{\mathcal D}}
\newcommand{\bp}{\mathbf{p}}
\newcommand{\rb}{\mathrm{B}}
\newcommand{\rc}{\mathrm{C}}
\newcommand{\La}{\Lambda}
\newcommand{\Si}{\Sigma}
\newcommand{\wg}{\wedge}
\newcommand{\eps}{\varepsilon}
\renewcommand{\phi}{\varphi}
\newcommand{\Supp}{\mathrm{Supp}\,}
\newcommand{\Max}{\mathrm{Max}}
\newcommand{\Sing}{\mathrm{Sing}}
\renewcommand{\tilde}[1]{\widetilde{#1}}
\begin{document}

\date{\today}

\title[Calabi--Yau structures for symmetric orders]{Calabi--Yau structures on derived and singularity\\[0.15cm] categories of symmetric orders}

\author{Norihiro Hanihara}
\address{Faculty of Mathematics, Kyushu University, 744 Motooka, Nishi-ku, Fukuoka, 819-0395, Japan}
\address{Kavli Institute for the Physics and Mathematics of the Universe (WPI), The University of Tokyo, 5-1-5 Kashiwanoha, Kashiwa, Chiba, 277-8583, Japan}
\email{hanihara@math.kyushu-u.ac.jp}

\author{Junyang Liu}
\address{School of Mathematical Sciences, University of Science and Technology of China, Hefei 230026, China}
\email{liuj@imj-prg.fr}
\urladdr{https://webusers.imj-prg.fr/~junyang.liu}

\begin{abstract}
We construct left and right Calabi--Yau structures on derived respectively singularity categories of symmetric orders $\La$ over commutative Gorenstein rings $R$.
For this, we first construct Calabi--Yau structures over $R$ by lifting Amiot's construction of Calabi--Yau structures on Verdier quotients to the dg level. Then we prove base change properties relating Calabi--Yau structures over $R$ to those over the base field $k$.
As a result, we prove the existence of a right Calabi--Yau structure on the dg singularity category associated with $\La$ which is a cyclic lift of the weak Calabi--Yau structure constructed by the first-named author and Iyama.
We also show the existence of a left Calabi--Yau structure on the dg bounded derived category of $\La$. This is a non-commutative generalization of a result by Brav and Dyckerhoff.
By combining the existence of the right Calabi--Yau structure on the dg singularity category with a structure theorem by Keller and the second-named author, we deduce that under suitable hypotheses, the singularity category associated with $\La$ is triangle equivalent to a generalized cluster category in the sense of Amiot.
\end{abstract}

\keywords{Calabi--Yau structure, singularity category, derived category, symmetric order}

\subjclass[2020]{18G80, 18G35, 13C14}


\maketitle

\vspace*{-1cm}
\tableofcontents

\section{Introduction}

Calabi--Yau triangulated categories are ubiquitous and of great interest in various areas of mathematics. They play an important role in algebraic geometry (notably in mirror symmetry), singularity theory, representation theory, and many other subjects. Let $k$ be a field and $d$ an integer. Recall that a $k$-linear $\Hom$-finite triangulated category $\cc$ is {\em $d$-Calabi--Yau} if it is endowed with bifunctorial bijections
\[
\Hom_{\cc}(X, Y) \xlongrightarrow{_\sim} D\Hom_{\cc}(Y, \Si^d X) \ko X, Y \in \cc \: ,
\]
where $D$ denotes the duality over $k$.

On the other hand, it is well-known that one should work with enhancements of triangulated categories, which provide us with additional tools for comparing such categories. Here we employ dg(=differential graded) enhancements \cite{Keller06d}. Following \cite{Yeung16, BravDyckerhoff19, KellerWang23}, we use (left and right) Calabi--Yau structures on a dg category to enhance the Calabi--Yau property of a triangulated category. We recall them in section~\ref{ss:Calabi-Yau structures}.

In this article, we focus on Calabi--Yau structures appearing in Cohen--Macaulay representation theory \cite{Yoshino90, LeuschkeWiegand12}, namely, the derived and singularity categories of commutative Gorenstein rings, or more generally, symmetric orders. Let $\La$ be a noetherian ring. The {\em singularity category} \cite{Buchweitz21, Orlov04} associated with $\La$ is, by definition, the Verdier quotient
\[
\sg \La = \cd^b(\mod \La)/\per \La
\]
of the bounded derived category of finitely generated $\La$-modules by the perfect derived category of $\La$.

Let us first discuss the singularity category associated with a symmetric order. The classical Auslander--Reiten duality \cite{Auslander78} implies that for a symmetric order $\La$ of Krull dimension $d$ which has an isolated singularity, the associated singularity category $\sg \La$ is a $(d-1)$-Calabi--Yau triangulated category. The first aim of this paper is to construct a right Calabi--Yau structure on the canonical dg enhancement of $\sg \La$, enhancing Auslander--Reiten's Calabi--Yau structure.

The first main result gives a general framework for constructing Calabi--Yau structures on dg quotients. Essentially, we work with dg categories over a commutative ring $R$. In section~\ref{ss:a dg enhancement of Amiot's construction}, we introduce a construction which lifts Amiot's approach to the construction of Serre functors on Verdier quotients to the dg level. Inspired by \cite{HaniharaIyama22, KellerLiu23b}, we prove that this construction can be interpreted as applying the connecting morphism in dual Hochschild homology. Moreover, we show that the connecting morphism preserves non-degeneracy under mild assumptions.

\begin{thmx}[see Theorem~\ref{thm:connecting morphism} for details] \label{thm:A}
Let $R$ be a commutative ring. Let $\ca$, $\cb$, and $\cc$ be small dg $R$-categories and
\[
\begin{tikzcd}
0 \arrow{r} & \cb \arrow{r}{I} & \ca \arrow{r}{Q} & \cc \arrow{r} & 0
\end{tikzcd}
\]
an exact sequence of dg categories. Let $\nu$ be a dg autoequivalence of $\ca$ which maps $I\cb$ to itself. Let $M$ be a dg $\ca$-bimodule which is isomorphic to $\ca(\nu ?, -)$ in $\cd(\ca^e)$. Denote $\mathrm{R}I^e_* M$ by $M_\cb$ and $\mathrm{L}Q^{e*}M$ by $M^\cc$. Denote the functor $\RHom_R(?, R)$ by $D$.
\begin{itemize}
\item[a)] The connecting morphism
\[
\delta \colon H^0(D\HH(\cb, M_\cb)) \longrightarrow H^1(D\HH(\cc, M^\cc))\: ,
\]
is an enhancement of Amiot's construction, \cf section~\ref{ss:a dg enhancement of Amiot's construction}.
\item[b)] Suppose that the canonical morphism $\ca \to DD\ca$ in $\cd(\ca^e)$ is an isomorphism. If the canonical morphism $\ca \to \RHom_{\cb}(\ca, \ca)$ in $\cd(\ca^e)$ is an isomorphism, then the connecting morphism $\delta$ preserves non-degeneracy.
\end{itemize}
\end{thmx}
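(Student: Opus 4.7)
\emph{Part (a).} The plan is to realize $\delta$ as the connecting morphism in the long exact sequence of cohomology groups of a distinguished triangle in $\cd(R)$ and match it with the dg enhancement of Amiot's construction from section~\ref{ss:a dg enhancement of Amiot's construction}. The starting point is the Hochschild homology localization triangle
\[
\HH(\cb, M_\cb) \longrightarrow \HH(\ca, M) \longrightarrow \HH(\cc, M^\cc) \longrightarrow \Si\HH(\cb, M_\cb)
\]
in $\cd(R)$ attached to the exact sequence $0 \to \cb \to \ca \to \cc \to 0$; this is a standard consequence of the fact that $\HH$ is an additive invariant of dg categories. Applying $D$ yields a triangle in $\cd(R)$ whose long exact sequence of cohomology groups contains the morphism $\delta$. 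To identify $\delta$ with Amiot's construction, I would choose explicit cofibrant replacements of $M$ as an $\ca^e$-module and of the regular bimodules, then trace a class $\eta$ through the standard zig-zag representing the connecting morphism. The resulting cochain-level formula should match the dg enhancement of Amiot's construction term by term.

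\emph{Part (b).} The strategy is to rephrase non-degeneracy as a bimodule isomorphism and transport it through the triangle using the two hypotheses on $\ca$. By the tensor-hom adjunction, a class $\eta \in H^0(D\HH(\cb, M_\cb))$ corresponds to a morphism $\phi_\eta \colon \cb \to D(M_\cb)$ in $\cd(\cb^e)$, and non-degeneracy is equivalent to $\phi_\eta$ being an isomorphism; similarly, $\delta(\eta)$ is non-degenerate if and only if the corresponding map $\phi_{\delta(\eta)}\colon \cc \to \Si D(M^\cc)$ in $\cd(\cc^e)$ is an iso. The hypothesis $\ca \simeq DD\ca$ guarantees reflexivity of $D$ on $\ca$, while $\ca \simeq \RHom_{\cb}(\ca, \ca)$ expresses that $\ca$ is dualizable as a $\cb$-bimodule; together they let us commute restriction along $I$ with $D$ and with the formation of $M_\cb$. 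With these identifications in place, the triangle obtained by applying $D$ to the Hochschild localization triangle fits into a morphism of triangles with a triangle of bimodule duality data, and a diagram chase shows that $\phi_{\delta(\eta)}$ is precisely the map induced by $\phi_\eta$ via the connecting morphism; it is thus an isomorphism whenever $\phi_\eta$ is one.

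The main obstacle will be in part (a): identifying the abstract connecting morphism with the explicit dg version of Amiot's construction requires meticulous bookkeeping and a careful choice of bimodule models aligning the two formulas at the cochain level. In part (b), the delicate point is the simultaneous use of both hypotheses to commute $D$ with restriction along $I$ and with the formation of $M_\cb$; here the assumptions are tight and the argument would break without either of them.
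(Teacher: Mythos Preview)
Your plan for part~(a) diverges from the paper's and is likely to be harder than you expect. The paper does \emph{not} work at the cochain level or choose explicit cofibrant replacements. Instead, it observes that the Hochschild triangle is obtained by applying $M\lten_{\ca^e}?$ to the triangle $\ca\lten_\cb\ca\to\ca\to\cc\to\Si\,\ca\lten_\cb\ca$ in $\cd(\ca^e)$, and that under the standard adjunctions the dual Hochschild groups identify with $\Hom_{\cd(\cb^e)}(M_\cb,D\cb)$, $\Hom_{\cd(\ca^e)}(M,D(\ca\lten_\cb\ca))$, etc. The construction $\Delta$ of section~\ref{ss:a dg enhancement of Amiot's construction} is defined via a morphism of triangles in $\cd(\ca^e)$ (Proposition~\ref{prop:morphism of triangles}), and the identification $\delta=\Delta$ then reduces to the commutativity of a single square of adjunction isomorphisms---no zig-zag tracing is needed. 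Your ``term by term'' matching could in principle succeed, but the paper's adjunction argument is both shorter and more robust, and it is exactly what renders the ``meticulous bookkeeping'' you flag unnecessary.

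Your outline for part~(b) has a real gap. The heart of the argument is \emph{not} that the hypotheses let one ``commute restriction along $I$ with $D$''; in particular, $\ca\simeq\RHom_\cb(\ca,\ca)$ is not a dualizability statement for $\ca$ over $\cb$ but rather the full faithfulness of $\mathrm{R}I_*\colon\per\ca\to\cd(\cb)$ (see Lemma~\ref{lem:local cover 2}). What the paper actually proves (Lemma~\ref{lem:isomorphisms preserving}) is that if $f\colon M_\cb\to D\cb$ is an isomorphism, then so are the two outer vertical maps $\phi(f)$ and $\psi^{-1}(f)$ in the morphism of triangles~(\ref{eq:morphism of triangles}); the third map $\bar\Delta(f)$ is then an isomorphism by the two-out-of-three property. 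The proof of Lemma~\ref{lem:isomorphisms preserving} is a genuine bootstrap: one first restricts the relevant unit (respectively counit) map to $\ca\ten_R\cb\op$, where quasi-full-faithfulness of $I$ forces it to be an isomorphism, and then feeds this partial isomorphism back through a carefully assembled commutative diagram (using both $\ca\simeq DD\ca$ and $\ca\simeq\RHom_\cb(\ca,\ca)$, together with $M\simeq\ca(\nu?,-)$) to upgrade it to an isomorphism in $\cd(\ca^e)$. Your sketch does not isolate this step, and without it the diagram chase you allude to will not close.
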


In particular, we lift Amiot's construction of Serre functors for certain Verdier quotients and the criterion of non-degeneracy in Theorem~1.3 of~\cite{Amiot09} to the level of dg categories, \cf~ Remarks~\ref{rk:lift}. Lemma~\ref{lem:local cover 1} shows that our condition on the morphism $\ca \to \RHom_{\cb}(\ca, \ca)$ discussed also in \cite{HaniharaKeller25} is a suitable dg version of the `local cover' condition in Amiot's criterion. We refer to Lemma~\ref{lem:local cover 2} for an alternative equivalent condition.

We apply Theorem~\ref{thm:A} to the singularity categories associated with symmetric orders. Let $k$ be a field and $R$ a commutative Gorenstein $k$-algebra of Krull dimension $d$. Denote by $\Max^k_d R$ the set of maximal ideals of $R$ of height at least $d$ whose residue fields are finite-dimensional over $k$. Let $\La$ be a symmetric $R$-order whose singular locus is contained in $\Max^k_d R$. Applying Theorem~\ref{thm:A} to $\ca=\cd^b_{dg}(\mod \La)$, $\cb=\per\!_{dg}\La$, $\cc=\sg\!_{dg}\La$, and $M=\ca$, we deduce in Proposition~\ref{prop:right CY structure} that the dg singularity category $\sg\!_{dg}\La$ has a right $(-1)$-Calabi--Yau structure over $R$. In Proposition~\ref{prop:right base change}, we show a base change property which relates (weak) right Calabi--Yau structures over $R$ to those over $k$. By combining Propositions~\ref{prop:right CY structure} and \ref{prop:right base change} we obtain the following result.

\begin{thmx}[=Theorem~\ref{thm:right CY structure}] \label{thm:B}
Let $k$ be a field and $R$ a commutative Gorenstein $k$-algebra of Krull dimension $d$. Let $\La$ be a symmetric $R$-order satisfying $\Sing_R \La \subseteq \Max_d^k R$. Then the dg category $\sg\!_{dg}\La$ carries a right $(d-1)$-Calabi--Yau structure over $k$.
\end{thmx}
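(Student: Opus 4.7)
My plan is to prove the theorem by composing two intermediate results highlighted in the excerpt: Proposition~\ref{prop:right CY structure}, which endows $\sg\!_{dg}\La$ with a right $(-1)$-Calabi--Yau structure over $R$, and Proposition~\ref{prop:right base change}, which converts (weak) right Calabi--Yau structures from $R$-linear to $k$-linear while shifting the degree by the Krull dimension $d$. Since $-1+d = d-1$, the composition yields the desired right $(d-1)$-Calabi--Yau structure over $k$.

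To establish Proposition~\ref{prop:right CY structure}, I apply Theorem~\ref{thm:A} to $\ca = \cd^b_{dg}(\mod \La)$, $\cb = \per\!_{dg}\La$, $\cc = \sg\!_{dg}\La$, $\nu = \id$, and $M = \ca$ the diagonal bimodule. Part~a) constructs the connecting morphism $\delta$, which will be our candidate Calabi--Yau class. To invoke part~b), two conditions must be verified. First, the natural map $\ca \to DD\ca$ in $\cd(\ca^e)$ must be an isomorphism; this is an incarnation of Auslander--Reiten duality for the symmetric $R$-order $\La$ over the Gorenstein base $R$: the identification $\La \cong \RHom_R(\La, R)$ of $\La$-bimodules, combined with reflexivity of finitely generated modules over the Gorenstein ring $R$, yields bifunctorial isomorphisms on $\Hom$ complexes in $\cd^b(\mod \La)$. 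Second, the natural map $\ca \to \RHom_\cb(\ca, \ca)$ in $\cd(\ca^e)$ must be an isomorphism (the local cover condition). By Lemma~\ref{lem:local cover 1}, this can be checked locally at the singular points of $\La$; the hypothesis $\Sing_R \La \subseteq \Max_d^k R$ confines the obstruction to a finite set of closed points at which $\La$ is an isolated singularity, reducing the claim to a standard statement about the completion of $\La$. With both hypotheses verified, part~b) of Theorem~\ref{thm:A} upgrades $\delta$ to a non-degenerate class, which is precisely a right $(-1)$-Calabi--Yau structure on $\sg\!_{dg}\La$ over $R$.

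For Proposition~\ref{prop:right base change}, my plan is to construct a canonical $R$-to-$k$ trace of degree $-d$ encoding how $R$ is Gorenstein over $k$: at each maximal ideal $\mathfrak{m} \in \Max_d^k R$ the residue field is $k$-finite-dimensional, and the local cohomology $H^d_{\mathfrak{m}}(R)$ provides a $k$-linear evaluation $R \to \Si^d k$. Pairing an $R$-linear right Calabi--Yau class against this trace produces a $k$-linear right Calabi--Yau class with degree shifted by $d$. The singular-locus hypothesis guarantees that the Hochschild support of $\sg\!_{dg}\La$ sits over $\Max_d^k R$, which ensures that the resulting $k$-linear pairing is well defined and remains non-degenerate.

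The principal technical obstacle will be verifying the local cover condition for $\ca = \cd^b_{dg}(\mod \La)$ over $\cb = \per\!_{dg}\La$: this is the only step that requires the full strength of $\Sing_R \La \subseteq \Max_d^k R$, and it is the dg refinement of Amiot's classical \emph{local cover} criterion. Handling it will likely require descending to completions at the finitely many singular points and exploiting the symmetric order structure there.
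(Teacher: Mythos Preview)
Your overall architecture---combine Proposition~\ref{prop:right CY structure} with Proposition~\ref{prop:right base change}---matches the paper exactly. However, you have misplaced the role of the hypothesis $\Sing_R \La \subseteq \Max_d^k R$, and your proposed verification of the local cover condition rests on a misreading of Lemma~\ref{lem:local cover 1}.

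Proposition~\ref{prop:right CY structure} holds for \emph{any} symmetric $R$-order over a Gorenstein ring $R$, with no hypothesis on the singular locus. The reflexivity $\ca \to D_R D_R \ca$ follows from Gorenstein duality (Hartshorne), as you note. The local cover condition $\ca \to \RHom_{\cb}(\ca,\ca)$ is handled in the paper not via Lemma~\ref{lem:local cover 1} but via the implication ii)~$\Rightarrow$~i) of Lemma~\ref{lem:local cover 2}: since $\cd^b(\mod\La)$ embeds fully faithfully into $\cd(\La)$ extending the Yoneda embedding of $\per\La$, the restriction functor $\per\ca \to \cd(\cb)$ is fully faithful. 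This is immediate and uses nothing about $\Sing_R\La$. Your reading of Lemma~\ref{lem:local cover 1} as allowing one to ``check locally at the singular points'' conflates Amiot's notion of a \emph{local $H^0(I)$-cover}---a categorical approximation of an object $X$ by an object of $H^0(I)\cb$ relative to a fixed target $Y$---with Zariski localization at primes; these are unrelated.

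The hypothesis $\Sing_R\La \subseteq \Max_d^k R$ enters only in the base-change step, exactly where you also invoke it: one needs $\Supp(\sg\!_{dg}\La) \subseteq \Max_d^k R$ to apply Proposition~\ref{prop:right base change}. The paper observes that localizing $\sg\!_{dg}\La$ at a prime $\mathfrak{p}$ gives $\sg\!_{dg}\La_{\mathfrak{p}}$, which vanishes off the singular locus. Your sketch of the base change via a degree-$d$ trace is in the right spirit; the paper implements it by replacing $D_R$ with $\RHom_R(?,\bigoplus_{\mathfrak{m}} E_R(R/\mathfrak{m}))$ on modules supported in $\Max_d^k R$ and identifying the latter with $\Si^{-d}D_k$. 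So your final paragraph identifying the local cover verification as ``the only step that requires the full strength'' of the singular-locus hypothesis is backwards: that step requires none of it, and the base change requires all of it.
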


This genuine right $(d-1)$-Calabi--Yau structure over $k$ on the dg category $\sg\!_{dg}\La$ is a lift of the weak right $(d-1)$-Calabi--Yau structure over $k$ constructed in part~(2) of Theorem~3.6 of~\cite{HaniharaIyama22}.

Recall that if $A$ is a smooth connective dg $k$-algebra endowed with a left $d$-Calabi--Yau structure in the sense of Brav--Dyckerhoff \cite{BravDyckerhoff19}, then its associated (generalized) cluster category in the sense of Amiot \cite{Amiot09} is the idempotent completion of the Verdier quotient $\per A/\pvd A$ of the perfect derived category of $A$ by its thick subcategory $\pvd A$ whose objects are the dg $A$-modules with perfect underlying dg $k$-modules. By combining Theorem~\ref{thm:B} with the structure theorem of \cite{KellerLiu23b} we establish a triangle equivalence between the singularity category $\sg \La$ and the cluster category associated with a deformed dg preprojective algebra.

\begin{corx}[=Corollary~\ref{cor:right CY structure}] \label{cor:C}
Let $k$ be a field of characteristic $0$ and $(R, \mathfrak{m})$ a complete commutative Gorenstein local $k$-algebra of Krull dimension $d\geq 2$. Let $\La$ be a symmetric $R$-order satisfying $\Sing_R \La \subseteq \{\mathfrak{m}\}$. Suppose that the triangulated category $\sg \La$ contains a $(d-1)$-cluster-tilting object. Then there exists a $d$-dimensional deformed dg preprojective algebra $\Pi$ such that $\sg \La$ is triangle equivalent to the associated cluster category $\cc_\Pi$.
\end{corx}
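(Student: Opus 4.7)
The plan is to reduce the statement to a direct application of the structure theorem of \cite{KellerLiu23b}, with the right Calabi--Yau structure produced by Theorem~\ref{thm:B} supplied as the essential input. The strategy breaks into two ingredients: first, to verify the hypotheses on $R$ and $\La$ that place us in the setting of Theorem~\ref{thm:B}, so that the dg enhancement $\sg\!_{dg}\La$ carries a right $(d-1)$-Calabi--Yau structure over $k$; second, to check that the remaining input data required by the structure theorem (a $(d-1)$-cluster-tilting object, characteristic zero, Hom-finiteness and the Krull--Schmidt property) are present, thereby obtaining a $d$-dimensional deformed dg preprojective algebra $\Pi$ together with the desired equivalence $\sg \La \simeq \cc_\Pi$.

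For the first ingredient, the maximal ideal $\mathfrak{m}$ of the complete local Gorenstein $k$-algebra $R$ has height $d$ and, as is standard for $R$-orders, the residue field $R/\mathfrak{m}$ is finite-dimensional over $k$, so that $\Sing_R \La \subseteq \{\mathfrak{m}\} \subseteq \Max^k_d R$. Theorem~\ref{thm:B} then applies directly to endow $\sg\!_{dg}\La$ with a right $(d-1)$-Calabi--Yau structure over $k$. Completeness of $R$ together with the isolated singularity assumption additionally ensures that $\sg\La$ is Hom-finite over $k$ and Krull--Schmidt, matching the ambient finiteness hypotheses of the structure theorem.

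For the second ingredient, I would invoke the structure theorem of \cite{KellerLiu23b}: under the present hypotheses (a field $k$ of characteristic zero, a right $(d-1)$-Calabi--Yau structure on a dg enhancement, and a $(d-1)$-cluster-tilting object in the triangulated category) the category is triangle equivalent to the Amiot cluster category $\cc_\Pi$ associated with some $d$-dimensional deformed dg preprojective algebra $\Pi$. The main subtlety I anticipate is verifying that the cyclic lift produced by Theorem~\ref{thm:B} is really of the shape expected by the structure theorem, namely that on cohomology it recovers the classical Auslander--Reiten duality on $\sg\La$. This match-up is precisely what Theorem~\ref{thm:A}(a) provides: the right Calabi--Yau structure of Theorem~\ref{thm:B} arises from the connecting morphism in dual Hochschild homology, and Theorem~\ref{thm:A}(a) identifies this connecting morphism as a dg enhancement of Amiot's construction, which on cohomology yields the Serre functor realizing Auslander--Reiten duality. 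The characteristic zero hypothesis is essential for the structure theorem, as it enters through the cyclic-homology manipulations underlying the deformed preprojective algebra description.
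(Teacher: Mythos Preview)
Your overall strategy matches the paper's: obtain the right $(d-1)$-Calabi--Yau structure on $\sg\!_{dg}\La$ from Theorem~\ref{thm:B} and then feed it, together with the $(d-1)$-cluster-tilting object, into the structure theorem of~\cite{KellerLiu23b} (specifically the implication from ii) to i) in Theorem~6.2.1 there).

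Two refinements are worth noting. First, the ambient finiteness hypotheses actually checked in the paper are that $\Gproj\La$ is naturally enriched over pseudo-compact $k$-vector spaces (because its morphism spaces are finitely generated over the complete local ring $R$) and that $\sg\La$ is Karoubian; the paper reaches these via the identification of $\sg\La$ with the stable category of $\Gproj\La$, which uses that $\La$ is Iwanaga--Gorenstein. Your ``Hom-finite and Krull--Schmidt'' are close in spirit but not the literal input to the cited theorem, so you should phrase the verification in terms of pseudo-compact enrichment and Karoubianness. Second, your entire discussion invoking Theorem~\ref{thm:A}(a) to match the Calabi--Yau structure with Auslander--Reiten duality is unnecessary: the structure theorem of~\cite{KellerLiu23b} takes as input \emph{any} right $(d-1)$-Calabi--Yau structure on the dg enhancement and does not require identifying it on cohomology with a pre-existing Serre functor. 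You can simply drop that paragraph.
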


From the perspective of realizing the singularity category as the cluster category associated with a deformed dg preprojective algebra, Corollary~\ref{cor:C} concludes a development which started with the result of Keller--Reiten \cite{KellerReiten08}. They proved that the singularity category associated with the invariant algebra of $\mathbb{C}\llbracket x, y, z \rrbracket$ under the diagonal $\Z/3\Z$-action is triangle equivalent to the cluster category associated with the $3$-Kronecker quiver.
This was generalized to suitable higher-dimensional cyclic quotient singularities by Thanhoffer de V\"olcsey--Van~den Bergh \cite{ThanhofferVandenBergh16} (based on the work of Amiot--Iyama--Reiten \cite{AmiotIyamaReiten15}), who obtained the above dg algebra $A$ using Van den Bergh's superpotential theorem \cite{VandenBergh15}.
A generalization to suitable $3$-dimensional non-cyclic quotient singularities is due to Kalck--Yang \cite{KalckYang18}, who relied on Ginzburg's \cite{Ginzburg06} to obtain the required $3$-dimensional Ginzburg dg algebra $A$. Most recently, the second-named author generalized \cite{Liu26} their result to arbitrary dimensions. 
Corollary~\ref{cor:C} is a significant generalization (respectively, refinement) of these results insofar as we prove that $A$ is a deformed dg preprojective algebra.

We refer to \cite{KalckYang18, Hanihara22} for general results on realizing an algebraic triangulated category as the Verdier quotient $\per A/\pvd A$ for a smooth connective dg algebra $A$ (not necessarily a deformed dg preprojective algebra). We also refer to \cite{Iyama18, HaniharaIyama22} for a different approach to constructing triangle equivalences between singularity categories and cluster categories.

We also study left Calabi--Yau structures on the dg bounded derived categories of symmetric orders. As in the case of right Calabi--Yau structures, we give a base change property which relates weak left Calabi--Yau structures over $R$ to those over $k$ in Proposition~\ref{prop:left base change}. It leads to the following result.

\begin{thmx}[=Theorem~\ref{thm:left CY structure}]
Let $k$ be a perfect field and $R$ a finitely generated commutative Gorenstein $k$-algebra of Krull dimension $d$. Let $\La$ be a symmetric $R$-order. Then the dg category $\cd^b_{dg}(\mod \La)$ carries a left $d$-Calabi--Yau structure over $k$.
\end{thmx}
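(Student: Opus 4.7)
The plan is to reduce the existence of a left $d$-Calabi--Yau structure over $k$ to the existence of a weak left $d$-Calabi--Yau structure over $R$ on $\ca = \cd^b_{dg}(\mod \La)$, and then invoke the base change property of Proposition~\ref{prop:left base change}. This reduction is legitimate because $R$ is finitely generated over the perfect field $k$, which is exactly the flatness/smoothness input required for comparison of Hochschild invariants over $R$ and over $k$.

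To construct the weak left $d$-Calabi--Yau structure over $R$, I would exploit two ingredients. First, the symmetric $R$-order hypothesis gives a canonical $\La$-bimodule isomorphism $\Hom_R(\La, R) \cong \La$. Second, the Gorenstein hypothesis on $R$ of Krull dimension $d$ provides a fundamental class in $\HN_d(R/R)$, equivalently a trace morphism $R[d] \to R$ exhibiting the Gorenstein structure. Combining these, the $R$-relative inverse dualizing bimodule $\ca^!_R$ should be quasi-isomorphic to $\ca[-d]$: concretely, the symmetric trace factoring through $\Hom_R(\La, R)$ composed with the Gorenstein fundamental class furnishes the desired bimodule morphism $\ca[-d] \iso \ca^!_R$ in $\cd(\ca \ten_R \ca^{op})$. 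The lift to $\HN_d(\ca/R)$ is obtained by pushing the cyclic lift of the fundamental class of $R$ forward along the structure map $R \to \La$ and twisting by the symmetric bimodule identification; this is in direct analogy with the commutative case of Brav--Dyckerhoff, now twisted by the symmetric $R$-order structure.

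The main obstacle will be realizing this construction at the genuine dg level: one must produce an explicit cycle in the relative negative cyclic complex of $\ca$ whose image in $\HH_d(\ca/R)$ induces the asserted bimodule quasi-isomorphism, and check non-degeneracy. An alternative that stays closer to the machinery developed in this paper is to use the exact sequence
\[
0 \longrightarrow \per\!_{dg} \La \longrightarrow \ca \longrightarrow \sg\!_{dg} \La \longrightarrow 0
\]
together with a dual version of Theorem~\ref{thm:A} (connecting morphisms for \emph{left} rather than right Calabi--Yau structures) to transport the right $(d-1)$-Calabi--Yau structure on $\sg\!_{dg}\La$ of Theorem~\ref{thm:B} into a left $d$-Calabi--Yau structure on $\ca$. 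Once the weak left $d$-Calabi--Yau structure over $R$ is in place, Proposition~\ref{prop:left base change} delivers the genuine left $d$-Calabi--Yau structure over $k$ and closes the argument.
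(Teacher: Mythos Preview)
Your overall strategy---construct the structure relative to $R$ and then apply Proposition~\ref{prop:left base change}---is the paper's strategy, but the execution has a degree error and two genuine gaps.

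First, over $R$ the dg category $\ca=\cd^b_{dg}(\mod\La)$ is left $0$-Calabi--Yau, not left $d$-Calabi--Yau: the symmetric order condition gives $D_R\La\simeq\La$ in degree $0$, and via Koszul duality (Proposition~\ref{prop:Koszul duality}) this produces a non-degenerate class in $\HH_0(\ca/R)$, i.e.\ $\ca^!_R\simeq\ca$ with no shift. There is no ``fundamental class in $\HN_d(R/R)$'': the Hochschild complex of $R$ over itself is $R$ in degree $0$, so $\HN_d(R/R)=0$ for $d>0$. The shift by $d$ comes entirely from the base change $\HH_0(\ca/R)\simeq\HH_d(\ca)$ of Proposition~\ref{prop:left base change}, whose hypothesis is that $R$ be a left $d$-Calabi--Yau $k$-algebra. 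A Gorenstein $k$-algebra is \emph{not} smooth in general, so Proposition~\ref{prop:left base change} does not apply to $R$ directly; the paper replaces $R$ by its Noether normalization (a polynomial ring, hence left $d$-Calabi--Yau), noting that $\La$ remains a symmetric order over this smaller ring. You are missing this step. Second, Proposition~\ref{prop:left base change} only compares \emph{Hochschild} homology, so it yields a non-degenerate class in $\HH_d(\ca)$ but not a negative cyclic lift; the paper obtains the lift to $\HN_d(\ca)$ separately, by showing $\HH_p(\ca)\simeq\Hom_{\cd(\La^e)}(\La,\Si^{d-p}\La)=0$ for $p>d$ (via Lemma~\ref{lem:full faithfulness}) and invoking the connectivity lemma of Brav--Dyckerhoff. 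Your proposal to lift cyclically over $R$ first and then base change does not work as stated, since no negative-cyclic comparison is available. Finally, your alternative route through $\sg\!_{dg}\La$ would import the hypothesis $\Sing_R\La\subseteq\Max_d^k R$ from Theorem~\ref{thm:right CY structure}, which is not assumed here.
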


This is a generalization of the result in Proposition~5.12 of \cite{BravDyckerhoff19}, which shows that the dg category $\cd^b_{dg}(\mod R)$ carries a left $d$-Calabi--Yau structure, extending the result from commutative Gorenstein algebras to symmetric orders. As a consequence, the dg quotient $\sg\!_{dg}\La$ also carries a left $d$-Calabi--Yau structure over $k$, \cf Corollary~\ref{cor:left CY structure}.

\subsection*{Acknowledgments}

The authors are grateful to Bernhard Keller for valuable comments and suggestions.

The first-named author is supported by JSPS KAKENHI Grant Numbers JP22KJ0737 and JP25K17233.

\section{Notation}

The following notation is used throughout the article: We let $k$ be a field. Algebras have units and morphisms of algebras preserve the units. Modules are unital right modules. For a commutative ground ring $R$, we assume that $R$ acts centrally on all bimodules we consider. For a ring $A$, we denote the category of finitely generated $A$-modules by $\mod A$. The degree of a homogeneous element $a$ in a graded vector space is denoted by $|a|$. We denote the shift functor of graded vector spaces by $\Si$. We use cohomological grading so that differentials are of degree $1$. For a commutative ground ring $R$ and a dg $R$-algebra $A$, we write $A^e$ for the dg enveloping algebra $A\ten_R A\op$ and denote the functors $\RHom_R(?, R)$ and $\RHom_{A^e}(?, A^e)$ by $D$ respectively $\vee$. For an algebraic triangulated category $\cc$, we write $\cc_{dg}$ for its canonical dg enhancement.

\section{Preliminaries}

\subsection{Singularity categories}

For a noetherian ring $\La$, we write $\cd^b(\mod \La)$ for the bound\-ed derived category of finitely generated $\La$-modules. Its thick subcategory generated by the free $\La$-module of rank one is the {\em perfect derived category} $\per \La$. The {\em singularity category} associated with $\La$ is defined \cite{Buchweitz21, Orlov04} as the Verdier quotient
\[
\sg \La = \cd^b(\mod \La)/\per \La \: .
\]
When $\La$ is a module-finite algebra over a commutative noetherian ring, the singularity category $\sg \La$ vanishes if and only if each $\La$-module has finite projective dimension. A commutative noetherian ring of finite Krull dimension is {\em Gorenstein} if it has finite injective dimension as a module over itself. Let $R$ be a commutative Gorenstein ring. A module-finite $R$-algebra $\La$ is a {\em symmetric $R$-order} if we have an isomorphism $\La \to \RHom_R(\La, R)$ in $\cd(\La^e)$.

\subsection{Calabi--Yau structures} \label{ss:Calabi-Yau structures}

In this section, we recall the necessary background on Hochschild and cyclic homologies, left and right Calabi--Yau structures. Fix a commutative ring $R$ as the ground ring. We work in the setting of dg $R$-algebras but everything generalizes to the setting of small dg $R$-categories.

Following section~1 of \cite{Kassel87}, a {\em mixed complex} over $R$ is a dg module over the dg algebra $\La=R[t]/(t^2)$, where $t$ is an indeterminate of degree $-1$ satisfying $d(t)=0$. For a dg $R$-algebra $A$ which is flat as a dg $R$-module, its {\em mixed complex} $\mathrm{M}(A)$ is defined as follows. Its underlying complex is defined to be the cone of the map $\id-\tau$ from the sum total complex $\rb^+(A)$ of
\[
\begin{tikzcd}
  \cdots\arrow{r} & A^{\ten_R 3} \arrow{r}{b'} & A^{\ten_R 2} \arrow{r}{b'} & A
\end{tikzcd}
\]
to the sum total complex $\rc(A)$ of
\[
\begin{tikzcd}
  \cdots\arrow{r} & A^{\ten_R 3} \arrow{r}{b} & A^{\ten_R 2} \arrow{r}{b} & A \: .
\end{tikzcd}
\]
Here $\tau$ maps $a_1\ten\cdots\ten a_p$ to
\[
(-1)^{(|a_p|+1)(p-1+|a_1|+\cdots+|a_{p-1}|)}a_p\ten a_1\ten \cdots\ten a_{p-1}\: ,
\]
the differential of $A^{\ten_R p}$ maps $a_1\ten\cdots\ten a_p$ to
\[
\sum_{i=1}^p (-1)^{i-1+|a_1|+\cdots+|a_{i-1}|}a_1\ten\cdots\ten d(a_i)\ten\cdots\ten a_p \: ,
\]
the map $b$ is the differential of the Hochschild chain complex and $b'$ is induced by that of the augmented bar resolution. Explicitly, the differential $b$ maps $a_1\ten\cdots\ten a_p$ to
\begin{align*}
& \sum_{i=1}^{p-1}(-1)^{i-1+|a_1|+\cdots+|a_i|} a_1\ten\cdots\ten a_i a_{i+1}\ten\cdots\ten a_p \\
& +(-1)^{(|a_p|+1)(p+|a_1|+\cdots+|a_{p-1}|)-1}a_p a_1\ten\cdots\ten a_{p-1}
\end{align*}
and $b'$ maps $a_1\ten\cdots\ten a_p$ to 
\[
\sum_{i=1}^{p-1}(-1)^{i-1+|a_1|+\cdots+|a_i|}a_1\ten\cdots\ten a_i a_{i+1}\ten\cdots\ten a_p\: .
\]
The $\La$-module structure on $\mathrm{M}(A)$ is determined by the action of $t$, which vanishes on $\rb^+(A)$ and maps the component $A^{\ten_R p}$ of $\rc(A)$ to the corresponding component of $\rb^+(A)$ via the map $\sum_{i=0}^{p-1}\tau^i$.

The {\em Hochschild complex} $\HH(A)$ of $A$ is defined to be the underlying complex of $\mathrm{M}(A)$. By construction, we have the canonical triangle
\[
\begin{tikzcd}
\rb^+(A) \arrow{r}{\id-\tau} & \rc(A) \arrow{r} & \mathrm{M}(A) \arrow{r} & \Si \rb^+(A)
\end{tikzcd}
\]
in $\cd(R)$. The complex $\rb^+(A)$ is contractible (since it is the sum total complex of a contractible complex of complexes) so that the morphism $\rc(A) \to \mathrm{M}(A)$ is a quasi-isomorphism. This shows that our definition of the Hochschild complex coincides with the classical one up to a canonical quasi-isomorphism. For a dg $A$-bimodule $M$, the {\em Hochschild complex of $A$ with coefficients in $M$} denoted by $\HH(A,M)$ is defined to be the sum total complex of
\[
\begin{tikzcd}
  \cdots\arrow{r} & M\ten_R A^{\ten_R 2} \arrow{r}{b} & M\ten_R A \arrow{r}{b} & M \: ,
\end{tikzcd}
\]
where the differential of $M\ten_R A^{\ten_R p}$ maps $m\ten a_1\ten\cdots\ten a_p$ to
\[
d(m)\ten a_1 \ten\cdots\ten a_p +\sum_{i=1}^p (-1)^{i+|m|+|a_1|+\cdots+|a_{i-1}|}m\ten a_1 \ten\cdots\ten d(a_i)\ten\cdots\ten a_p
\]
and the differential $b$ maps $m\ten a_1\ten\cdots\ten a_p$ to
\begin{align*}
& (-1)^{|m|}ma_1\ten\cdots\ten a_p \\
& +\sum_{i=1}^{p-1}(-1)^{i+|m|+|a_1|+\cdots+|a_i|} m\ten a_1\ten\cdots\ten a_i a_{i+1}\ten\cdots\ten a_p \\
& +(-1)^{(|a_p|+1)(p+1+|m|+|a_1|+\cdots+|a_{p-1}|)-1}a_p m\ten a_1\ten\cdots\ten a_{p-1}\: .
\end{align*}
The homologies of $\HH(A)$ and $\HH(A,M)$ are called {\em Hochschild homology} $\HH_*(A)$ respectively {\em Hochschild homology with coefficients in $M$} denoted by $\HH_*(A,M)$.

Let $\bp R$ be the minimal cofibrant resolution of $R$ as a dg $\La$-module. The {\em cyclic complex} $\HC(A)$ of $A$ is defined to be the complex $\mathrm{M}(A)\ten_\La \bp R$. The {\em negative cyclic complex} $\HN(A)$ of $A$ is defined to be the complex $\Hom_{\cc_{dg}(\La)} (\bp R, \mathrm{M}(A))$. Their homologies are called {\em Hochschild homology} $\HH_*(A)$, {\em cyclic homology} $\HC_*(A)$, {\em negative cyclic homology} $\HN_*(A)$, respectively. We have the ISB triangle
\[
\begin{tikzcd}
  \HH(A)\arrow{r}{I} & \HC(A)\arrow{r}{S} & \Si^2\HC(A)\arrow{r}{B} & \Si \HH(A)
\end{tikzcd}
\]
in the homotopy category of complexes which relates Hochschild and cyclic complexes.

We write $\cd(A)$ for the (unbounded) derived category of $A$. Its thick subcategory generated by the free dg $A$-module of rank one is the {\em perfect derived category} $\per A$. It consists of compact objects in $\cd(A)$. Recall that $A$ is {\em smooth} if $A$ is perfect over $A^e$.

We now recall the notions of Calabi--Yau structures from section~3 of the original article \cite{BravDyckerhoff19}, section~4 of the article \cite{Yeung16} or section~10.3 of the survey article \cite{KellerWang23}. Fix an integer $d$. Let $A$ be a smooth dg $R$-algebra. A {\em left $d$-Calabi--Yau structure} on $A$ is a class $[\tilde{\xi}]$ in $\HN_d(A)$ whose image $[\xi]$ under the canonical map $\HN_d(A)\to \HH_d(A)$ is {\em non-degenerate}, \ie the morphism $\Si^d A^\vee \to A$ in $\cd(A^e)$ obtained from $[\xi]$ via
\[
\begin{tikzcd}
\HH_d(A)\arrow{r}{\sim} & H^{-d}(\RHom_{A^e}(A^\vee,A))\arrow[no head]{r}{\sim} & \Hom_{\cd(A^e)}(\Si^d A^\vee,A)
\end{tikzcd}
\]
is an isomorphism.

Let $A$ be a dg $R$-algebra. A {\em right $d$-Calabi--Yau structure} on $A$ is a class $[\tilde{x}]$ in \linebreak $H^{-d}(D\HC(A))$ whose image $[x]$ under the canonical map $H^{-d}(D\HC(A))\to H^{-d}(D\HH(A))$ is {\em non-degenerate}, \ie the morphism $A\to \Si^{-d}DA$ in $\cd(A^e)$ obtained from $[x]$ via 
\[
\begin{tikzcd}
H^{-d}(D\HH(A))\arrow[no head]{r}{\sim} & H^{-d}(\RHom_{A^e}(A, DA))\arrow[no head]{r}{\sim} & \Hom_{\cd(A^e)}(A,\Si^{-d}DA)
\end{tikzcd}
\]
is an isomorphism.

For an arbitrary dg $R$-algebra (not necessarily being flat as a dg $R$-module), the above notions are defined as applying the corresponding constructions to a quasi-isomorphic dg $R$-algebra which is flat as a dg $R$-module.

Let $k$ be a field as the ground ring. Recall that a $k$-linear category is $\Hom$-finite if all morphism spaces between its objects are finite-dimensional over $k$. A $k$-linear $\Hom$-finite triangulated category $\cc$ is {\em $d$-Calabi--Yau} if it is endowed with bifunctorial bijections
\[
\Hom_{\cc}(X, Y) \xlongrightarrow{_\sim} D\Hom_{\cc}(Y, \Si^d X) \ko X, Y \in \cc \: ,
\]
By definition, a right $d$-Calabi--Yau structure on a pretriangulated dg $k$-category $\ca$ yields a $d$-Calabi--Yau structure on the triangulated category $H^0(\ca)$.

\section{Morphisms of triangles as connecting morphisms}

We use the following setting throughout this section. Let $R$ be a commutative ring. We consider it as the ground ring. We work in the setting of dg $R$-categories which \linebreak are flat as dg $R$-modules but everything generalizes to the setting of arbitrary dg \linebreak $R$-categories (not necessarily being flat as dg $R$-modules). In the general case, we replace these dg $R$-categories with quasi-equivalent dg $R$-categories which are flat as dg $R$-modules, \cf~part~3) of Proposition 2.3 of \cite{Toen07}. Let $\ca$, $\cb$, and $\cc$ be small dg $R$-categories which are flat as dg $R$-modules and
\begin{equation} \label{eq:exact sequence of dg categories}
\begin{tikzcd}
0 \arrow{r} & \cb \arrow{r}{I} & \ca \arrow{r}{Q} & \cc \arrow{r} & 0
\end{tikzcd}
\end{equation}
an exact sequence of dg categories, \ie the sequence
\[
\begin{tikzcd}
0 \arrow{r} &\cd(\cb) \arrow{r}{\mathrm{L}I^*} & \cd(\ca) \arrow{r}{\mathrm{L}Q^*} & \cd(\cc) \arrow{r} & 0
\end{tikzcd}
\]
of triangulated categories is exact. Let $\nu$ be a dg autoequivalence of $\ca$ which maps $I\cb$ to itself. Let $M$ be a dg $\ca$-bimodule which is isomorphic to $\ca(\nu ?, -)$ in $\cd(\ca^e)$. Denote $\mathrm{R}I^e_* M$ by $M_\cb$ and $\mathrm{L}Q^{e*}M$ by $M^\cc$.

\subsection{Connecting morphisms in dual Hochschild homology}

From Theorem~3.1 of \cite{Keller98}, we deduce that the exact sequence~(\ref{eq:exact sequence of dg categories}) of dg categories yields the triangle
\begin{equation} \label{eq:triangle of Hochschild complexes}
\begin{tikzcd}
\HH(\cb, M_\cb) \arrow{r} & \HH(\ca, M) \arrow{r} & \HH(\cc, M^\cc) \arrow{r} & \Si \HH(\cb, M_\cb)
\end{tikzcd}
\end{equation}
in $\cd(R)$. Notice that this triangle can be obtained as follows. It is well-known that the exact sequence~(\ref{eq:exact sequence of dg categories}) of dg categories gives rise to the triangle
\begin{equation} \label{eq:triangle of resolutions 1}
\begin{tikzcd}
\ca \lten_{\cb} \ca \arrow{r} & \ca \arrow{r} & \cc \arrow{r} & \Si \ca \lten_{\cb} \ca
\end{tikzcd}
\end{equation}
in $\cd(\ca^e)$. Similarly, we have the triangle
\begin{equation} \label{eq:triangle of resolutions 2}
\begin{tikzcd}
\ca \lten_{\cb}M_\cb \lten_{\cb} \ca \arrow{r} & M \arrow{r} & M^\cc \arrow{r} & \Si \ca \lten_{\cb}M_\cb \lten_{\cb} \ca
\end{tikzcd}
\end{equation}
in $\cd(\ca^e)$. Applying the triangle functor $M \lten_{\ca^e}\, ?$ to the triangle~(\ref{eq:triangle of resolutions 1}) yields the triangle~(\ref{eq:triangle of Hochschild complexes}).

If we dualize the triangle~(\ref{eq:triangle of Hochschild complexes}) and take homology, we obtain the long exact sequence
\[
H^{p-1}(D\HH(\cb, M_\cb)) \xlongrightarrow{\delta} H^p(D\HH(\cc, M^\cc)) \longrightarrow H^p(D\HH(\ca, M)) \longrightarrow H^p(D\HH(\cb, M_\cb))
\]
in dual Hochschild homology with the connecting morphism $\delta$.

\subsection{A dg enhancement of Amiot's construction} \label{ss:a dg enhancement of Amiot's construction}

The aim of this section is to give a construction which assigns a morphism $\Delta(f) \colon M^\cc \to \Si D\cc$ in $\cd(\cc^e)$ to a given morphism $f\colon M_\cb \to D\cb$ in $\cd(\cb^e)$. In fact, this construction will give an interpretation of the one given in \cite{HaniharaIyama22} in terms of dual Hochschild homology.

We start with the following observation. It shows that the given morphism $f$ in $\cd(\cb^e)$ canonically yields certain morphisms in $\cd(\ca^e)$.

\begin{lemma} \label{lem:adjunctions}
Let $I\colon \cb \to \ca$ be a quasi-fully faithful dg functor. Then we have the canonical bijections
\[
\begin{tikzcd}
\phi \colon \Hom_{\cd(\cb^e)}(M_\cb, D\cb) \arrow[no head]{r}{\sim} & \Hom_{\cd(\ca^e)}(M, D(\ca \lten_{\cb} \ca))
\end{tikzcd}
\]
and
\[
\begin{tikzcd}
\psi \colon \Hom_{\cd(\ca^e)}(\ca \lten_{\cb}M_\cb \lten_{\cb} \ca, D\ca) \arrow[no head]{r}{\sim} & \Hom_{\cd(\cb^e)}(M_\cb, D\cb) \: .
\end{tikzcd}
\]
\end{lemma}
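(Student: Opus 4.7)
The plan is to prove both bijections by a careful use of the extension--restriction adjunction along $I^e\colon \cb^e \to \ca^e$ together with standard tensor--Hom duality over the ground ring $R$.

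As a common setup step, I would first record the bimodule identifications
\[
\ca \lten_\cb \ca \;\simeq\; \mathrm{L}I^{e*}\cb \qquad \text{and} \qquad \ca \lten_\cb M_\cb \lten_\cb \ca \;\simeq\; \mathrm{L}I^{e*} M_\cb
\]
in $\cd(\ca^e)$. Both are instances of $\mathrm{L}I^{e*} N \simeq N\lten_{\cb^e} \ca^e \simeq \ca\lten_\cb N \lten_\cb \ca$ for $N \in \cd(\cb^e)$, and follow from a routine manipulation of derived tensor products that is valid thanks to the blanket flatness assumption on $\ca$ and $\cb$ as dg $R$-modules.

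For $\phi$, I would then establish the projection-formula isomorphism
\[
M_\cb \lten_{\cb^e} \cb \;\simeq\; M \lten_{\ca^e} (\ca \lten_\cb \ca)
\]
in $\cd(R)$, using associativity of derived tensor products and the basic fact $M \lten_{\ca^e} \ca^e \simeq M_\cb$ as $\cb^e$-modules. Applying $D = \RHom_R(?, R)$ and invoking $\RHom_R(X \lten_S Y, R) \simeq \RHom_S(X, DY)$ on both sides then gives
\[
\RHom_{\cb^e}(M_\cb, D\cb) \;\simeq\; \RHom_{\ca^e}(M, D(\ca \lten_\cb \ca))
\]
in $\cd(R)$; passing to $H^0$ yields $\phi$. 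Notice that this half of the lemma does not really use quasi-fully faithfulness.

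For $\psi$, I would combine $\ca \lten_\cb M_\cb \lten_\cb \ca \simeq \mathrm{L}I^{e*} M_\cb$ with the adjunction $(\mathrm{L}I^{e*}, \mathrm{R}I^e_*)$ to obtain
\[
\Hom_{\cd(\ca^e)}(\ca \lten_\cb M_\cb \lten_\cb \ca, D\ca) \;\simeq\; \Hom_{\cd(\cb^e)}(M_\cb, (D\ca)_\cb),
\]
and then replace $(D\ca)_\cb$ by $D\cb$. This last substitution is the crux of the lemma and the only place where quasi-fully faithfulness of $I$ is used: that hypothesis is equivalent to the canonical morphism of $\cb$-bimodules $\cb \to \ca_\cb = \mathrm{R}I^e_*\ca$ being a quasi-isomorphism, and applying $D$ gives $(D\ca)_\cb \simeq D\cb$ in $\cd(\cb^e)$.

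The main obstacle is not any single deep step but rather the careful bookkeeping of all left/right bimodule structures when invoking associativity of $\lten$ and tensor--Hom duality across the three rings $R$, $\cb^e$, and $\ca^e$; in particular, one must verify that the flatness of $\ca$ and $\cb$ over $R$ is enough to dispense with further cofibrant replacements when forming $D$ or rearranging the derived tensor products.
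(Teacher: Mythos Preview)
Your proposal is correct and follows essentially the same strategy as the paper: both use the extension--restriction adjunction $(?\lten_{\cb^e}\ca^e,\RHom_{\ca^e}(\ca^e,?))$ for $\psi$ together with the identification $(D\ca)_\cb\simeq D\cb$ coming from quasi-full faithfulness, and for $\phi$ your projection-formula-then-dualize argument is the same content as the paper's use of the companion adjunction $(?\lten_{\ca^e}\ca^e,\RHom_{\cb^e}(\ca^e,?))$. Your remark that $\phi$ does not actually require quasi-full faithfulness is correct and is a useful clarification of the paper's terse presentation.
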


\begin{proof}
Since the dg functor $I$ is quasi-fully faithful, the restriction of the dg $\ca^e$-module $\ca$ to $\cb^e$ is isomorphic to $\cb$ in $\cd(\cb^e)$. Then the bijectivity of $\phi$ follows from the adjunction
\[
(?\lten_{\ca^e} \ca^e, \RHom_{\cb^e}(\ca^e, ?))
\]
induced by the dg $\ca^e$-$\cb^e$-bimodule $\ca^e$ and the bijectivity of $\psi$ follows from the adjunction
\[
(?\lten_{\cb^e}\ca^e, \RHom_{\ca^e}(\ca^e, ?))
\]
induced by the dg $\cb^e$-$\ca^e$-bimodule $\ca^e$.
\end{proof}

Using these bijections, our construction is summarized as follows.

\begin{proposition} \label{prop:morphism of triangles}
Consider the diagram
\begin{equation} \label{eq:morphism of triangles}
\begin{tikzcd}
\ca \lten_{\cb}M_\cb \lten_{\cb}\ca \arrow{r} \arrow{d}{\psi^{-1}(f)} & M \arrow{r} \arrow{d}{\phi(f)} & M^\cc \arrow{r} \arrow[dashed]{d}{\bar{\Delta}(f)} & \Si \ca \lten_{\cb}M_\cb \lten_{\cb}\ca \arrow{d}{\Si \psi^{-1}(f)} \\
D\ca \arrow{r} & D(\ca \lten_{\cb} \ca) \arrow{r} & \Si D\cc \arrow{r} & \Si D\ca
\end{tikzcd}
\end{equation}
in $\cd(\ca^e)$ with the top row the triangle~(\ref{eq:triangle of resolutions 2}) and the bottom row the rotated dual of the triangle~(\ref{eq:triangle of resolutions 1}).
\begin{itemize}
\item[a)] The left square is commutative.
\item[b)] There exists a morphism $\bar{\Delta}(f)\colon M^\cc \to \Si D\cc$ in $\cd(\ca^e)$ such that the middle and right squares are commutative. Moreover, it is uniquely determined by the commutativity of the middle or the right square.
\end{itemize}
\end{proposition}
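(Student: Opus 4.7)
The plan, for part (a), is to exhibit both compositions in the left square as manifestations of one and the same ``iterated adjoint'' of $f$. Using the identifications
\[
\ca \lten_\cb M_\cb \lten_\cb \ca \;\cong\; M_\cb \lten^{\mathrm{L}}_{\cb^e} \ca^e \quad\text{and}\quad D(\ca\lten_\cb \ca) \;\cong\; \RHom_{\cb^e}(\ca^e, D\cb) \: ,
\]
together with the two adjunctions of Lemma~\ref{lem:adjunctions}, one obtains a canonical bijection
\[
\Hom_{\cd(\ca^e)}(\ca \lten_\cb M_\cb \lten_\cb \ca,\, D(\ca\lten_\cb \ca)) \;\cong\; \Hom_{\cd(\cb^e)}(M_\cb, D\cb) \: .
\]
Denote by $\Psi(f)$ the morphism corresponding to $f$. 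I would then verify, by unwinding the definitions of $\phi$ and $\psi^{-1}$ as adjoints and using the triangle identities between the relevant units and counits, that $\phi(f)\circ\alpha = \Psi(f) = \beta\circ\psi^{-1}(f)$, where $\alpha\colon \ca\lten_\cb M_\cb\lten_\cb \ca \to M$ is the counit of the tensor-restriction adjunction and $\beta\colon D\ca \to D(\ca\lten_\cb \ca)$ is $D$ applied to the multiplication map $\ca\lten_\cb \ca \to \ca$. This yields the commutativity of the left square.

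For the existence in part (b), once the left square's commutativity is established, the axiom TR3 applied in $\cd(\ca^e)$ to the morphism between the triangles~(\ref{eq:triangle of resolutions 2}) and the rotated dual of~(\ref{eq:triangle of resolutions 1}), with first two vertical components $\psi^{-1}(f)$ and $\phi(f)$, directly produces a morphism $\bar{\Delta}(f)\colon M^\cc \to \Si D\cc$ making the middle and right squares commute.

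For the uniqueness, the key input is that restriction of scalars along $\cb^e \hookrightarrow \ca^e$ sends $\cc$ and $M^\cc$ to zero in $\cd(\cb^e)$. Indeed, restricting the triangles~(\ref{eq:triangle of resolutions 1}) and~(\ref{eq:triangle of resolutions 2}) to $\cb^e$ and using $\ca|_{\cb^e} \cong \cb$ (since $I$ is quasi-fully faithful), as well as the analogous computation $(\ca\lten_\cb \ca)|_{\cb^e} \cong \cb$, the first maps become identities, forcing $\cc|_{\cb^e} \cong 0$ and $M^\cc|_{\cb^e} \cong 0$. Combined with the adjunctions of Lemma~\ref{lem:adjunctions}, this yields
\[
\Hom_{\cd(\ca^e)}(\ca\lten_\cb M_\cb\lten_\cb \ca,\, D\cc) = 0 \quad\text{and}\quad \Hom_{\cd(\ca^e)}(M^\cc,\, D(\ca\lten_\cb \ca)) = 0 \: .
\]
A diagram chase with the long exact $\Hom$-sequence obtained by applying $\Hom(-,\Si D\cc)$ to the top triangle then shows that any two fillers making the middle square commute differ by a morphism of the form $k\circ\partial$ with $k \in \Hom(\ca\lten_\cb M_\cb\lten_\cb \ca, D\cc) = 0$, giving uniqueness from the middle square. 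Symmetrically, the long exact $\Hom$-sequence obtained by applying $\Hom(M^\cc, -)$ to the bottom triangle, combined with the second vanishing, shows that the map $\Hom(M^\cc, \Si D\cc) \to \Hom(M^\cc, \Si D\ca)$ induced by $\Si\beta$ is injective, giving uniqueness from the right square.

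The main technical obstacle I anticipate is the bookkeeping in part (a): verifying that both $\phi(f)\circ\alpha$ and $\beta\circ\psi^{-1}(f)$ coincide with $\Psi(f)$ is not formal from a single universal property and requires carefully tracking the units and counits of the two distinct adjunctions in Lemma~\ref{lem:adjunctions}. Once this is handled, part (b) is essentially routine, reducing to TR3 and the $\Hom$-vanishing arguments outlined above.
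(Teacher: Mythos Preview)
Your proposal is correct and follows essentially the same approach as the paper. For part~(a), the paper organizes the verification slightly differently: rather than tracking units and counits directly, it observes that the square
\[
\begin{tikzcd}
D(M_\cb \lten_{\cb^e} \cb) \arrow[no head]{r}{\sim} \arrow[no head]{d}{\wr} & D((\ca \lten_{\cb}M_\cb \lten_{\cb} \ca)\lten_{\ca^e} \ca) \arrow{d}{\wr} \\
D(M\lten_{\ca^e}(\ca \lten_{\cb} \ca)) \arrow{r}{\sim} & D((\ca \lten_{\cb}M_\cb \lten_{\cb} \ca)\lten_{\ca^e}(\ca \lten_{\cb} \ca))
\end{tikzcd}
\]
in $\cd(R)$ commutes (all four corners are canonically identified), and then passes to $\Hom$-spaces by adjunction; this sidesteps the bookkeeping you anticipate, since the images of $\phi(f)$ and $\psi^{-1}(f)$ in the lower-right corner automatically coincide with the common image of $f$. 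For part~(b), your argument is identical to the paper's: TR3 for existence, and the two vanishings
\[
\RHom_{\ca^e}(\ca \lten_{\cb}M_\cb \lten_{\cb}\ca, D\cc) \simeq D(M_\cb \lten_{\cb^e}\cc)=0
\quad\text{and}\quad
\RHom_{\ca^e}(M^\cc, D(\ca \lten_{\cb} \ca)) \simeq D(M^\cc \lten_{\cb^e}\cb)=0
\]
for uniqueness from the middle respectively right square.
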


\begin{proof}
a) Since the square
\[
\begin{tikzcd}
D(M_\cb \lten_{\cb^e} \cb) \arrow[no head]{r}{\sim} \arrow[no head]{d}{\wr} & D((\ca \lten_{\cb}M_\cb \lten_{\cb} \ca)\lten_{\ca^e} \ca) \arrow{d}{\wr} \\
D(M\lten_{\ca^e}(\ca \lten_{\cb} \ca)) \arrow{r}{\sim} & D((\ca \lten_{\cb}M_\cb \lten_{\cb} \ca)\lten_{\ca^e}(\ca \lten_{\cb} \ca))
\end{tikzcd}
\]
in $\cd(R)$ is commutative, by adjunctions and taking homology, we obtain the commutative square
\[
\begin{tikzcd}
\Hom_{\cd(\cb^e)}(M_\cb, D\cb) \arrow[no head]{r}{\sim} \arrow[no head]{d}{\wr} & \Hom_{\cd(\ca^e)}(\ca \lten_{\cb}M_\cb \lten_{\cb} \ca, D\ca) \arrow{d}{\wr} \\
\Hom_{\cd(\ca^e)}(M, D(\ca \lten_{\cb} \ca)) \arrow{r}{\sim} & \Hom_{\cd(\ca^e)}(\ca \lten_{\cb}M_\cb \lten_{\cb} \ca, D(\ca \lten_{\cb} \ca))\mathrlap{\: .}
\end{tikzcd}
\]
Therefore, the images of $\phi(f)$ and $\psi^{-1}(f)$ in the lower-right corner coincide, which precisely means the desired commutativity.

b) By part~a) and the axiom of triangulated categories, there is a morphism $\bar{\Delta}(f)$ completing the diagram~(\ref{eq:morphism of triangles}) into a morphism of triangles in $\cd(\ca^e)$. It is uniquely determined by the commutativity of the middle square because the complex
\[
\begin{tikzcd}
\RHom_{\ca^e}(\ca \lten_{\cb}M_\cb \lten_{\cb}\ca, D\cc) \arrow[no head]{r}{\sim} & D((\ca \lten_{\cb}M_\cb \lten_{\cb}\ca)\lten_{\ca^e}\cc) \arrow[no head]{r}{\sim} & D(M_\cb \lten_{\cb^e}\cc)
\end{tikzcd}
\]
in $\cd(R)$ vanishes. It is uniquely determined by the commutativity of the right square because the complex
\[
\begin{tikzcd}
\RHom_{\ca^e}(M^\cc, D(\ca \lten_{\cb} \ca)) \arrow[no head]{r}{\sim} & D(M^\cc \lten_{\ca^e} (\ca \lten_{\cb} \ca)) \arrow[no head]{r}{\sim} & D(M^\cc \lten_{\cb^e} \cb)
\end{tikzcd}
\]
in $\cd(R)$ vanishes.
\end{proof}

Finally, since the sequence~(\ref{eq:exact sequence of dg categories}) of dg categories is exact, the dg functor $Q$ is a localization and hence so is $Q^e$. We denote the composed map
\[
\begin{tikzcd}
\Hom_{\cd(\cb^e)}(M_\cb, D\cb) \arrow{r}{\bar{\Delta}} & \Hom_{\cd(\ca^e)}(M^\cc, \Si D\cc) & \Hom_{\cd(\cc^e)}(M^\cc, \Si D\cc) \arrow[swap]{l}{\sim}
\end{tikzcd}
\]
by $\Delta$.

\subsection{Connecting morphisms and the dg enhancement of Amiot's construction}

We are now in the position to state the first main result of this article which gives a description of the connecting morphism as the construction in the previous section. This is an interpretation of the construction in \cite{HaniharaIyama22} in terms of dual Hochschild homology. We state the result under the setting that the dg $R$-categories $\ca$, $\cb$, and $\cc$ are flat as dg $R$-modules but it generalizes to the setting of arbitrary dg $R$-categories by replacing them with quasi-equivalent dg $R$-categories which are flat as dg $R$-modules.

\begin{theorem} \label{thm:connecting morphism}
Let $R$ be a commutative ring. Let $\ca$, $\cb$, and $\cc$ be small dg $R$-categories which are flat as dg $R$-modules and
\[
\begin{tikzcd}
0 \arrow{r} & \cb \arrow{r}{I} & \ca \arrow{r}{Q} & \cc \arrow{r} & 0
\end{tikzcd}
\]
an exact sequence of dg categories. Let $\nu$ be a dg autoequivalence of $\ca$ which maps $I\cb$ to itself. Let $M$ be a dg $\ca$-bimodule which is isomorphic to $\ca(\nu ?, -)$ in $\cd(\ca^e)$. Denote $\mathrm{R}I^e_* M$ by $M_\cb$ and $\mathrm{L}Q^{e*}M$ by $M^\cc$.
\begin{itemize}
\item[a)] The following square containing the connecting morphism $\delta$ and the map $\Delta$ in section~\ref{ss:a dg enhancement of Amiot's construction} is commutative.
\[
\begin{tikzcd}
H^0(D\HH(\cb, M_\cb)) \arrow{r}{\delta} \arrow[no head]{d}{\wr} & H^1(D\HH(\cc, M^\cc)) \arrow[no head]{d}{\wr} \\
\Hom_{\cd(\cb^e)}(M_\cb, D\cb) \arrow[swap]{r}{\Delta} & \Hom_{\cd(\cc^e)}(M^\cc, \Si D\cc)
\end{tikzcd}
\]
\item[b)] Suppose that the canonical morphism $\ca \to DD\ca$ in $\cd(\ca^e)$ is an isomorphism. If the canonical morphism $\ca \to \RHom_{\cb}(\ca, \ca)$ in $\cd(\ca^e)$ is an isomorphism, then the connecting morphism $\delta$ preserves non-degeneracy.
\end{itemize}
\end{theorem}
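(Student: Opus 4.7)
For part (a), my plan is to unwind $\delta$ and identify it with $\Delta$. The Hochschild triangle~(\ref{eq:triangle of Hochschild complexes}) is obtained by applying $M \lten_{\ca^e}(-)$ to the resolution triangle~(\ref{eq:triangle of resolutions 1}), so its $R$-linear dual can be computed as $\RHom_{\ca^e}(-,DM)$ applied to the same triangle. The canonical identifications $D\HH(X,N) \simeq \RHom_{X^e}(N,DX)$ for $X \in \{\cb,\ca,\cc\}$ with the corresponding coefficient bimodule $N \in \{M_\cb,M,M^\cc\}$, combined with the adjunctions of Lemma~\ref{lem:adjunctions}, transport $\delta$ into a map $\Hom_{\cd(\cb^e)}(M_\cb,D\cb) \to \Hom_{\cd(\cc^e)}(M^\cc,\Si D\cc)$. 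To identify this with $\Delta$, I would appeal to the uniqueness clause of Proposition~\ref{prop:morphism of triangles}(b): it suffices to show that the image of $\delta(f)$, viewed as a morphism $M^\cc \to \Si D\cc$, fits into the middle (equivalently, right) commutative square of diagram~(\ref{eq:morphism of triangles}), which reduces to a naturality check for the adjunctions $\phi$ and $\psi$.

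For part (b), assume $f \colon M_\cb \to D\cb$ is an isomorphism in $\cd(\cb^e)$; we must show that $\Delta(f)$, viewed as a morphism in $\cd(\cc^e)$, is an isomorphism. The plan is to apply the five lemma to the image of diagram~(\ref{eq:morphism of triangles}) under $\mathrm{L}Q^{e*} \colon \cd(\ca^e) \to \cd(\cc^e)$; for this it suffices to show that after $\mathrm{L}Q^{e*}$, both outer vertical arrows $\psi^{-1}(f)$ and $\phi(f)$ become isomorphisms in $\cd(\cc^e)$. The hypothesis $\ca \simeq \RHom_\cb(\ca,\ca)$ is the dg analog of Amiot's local-cover condition, and it ensures (via iterated use together with $M \simeq \ca(\nu(-),-)$ and $\nu(I\cb) \subseteq I\cb$) that the adjunction units appearing in the factorizations of $\phi(f)$ and $\psi^{-1}(f)$ have cones killed by $\mathrm{L}Q^{e*}$. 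Concretely, $\phi(f)$ factors as
\[
M \xrightarrow{\ \eta_M\ } \RHom_{\cb^e}(\ca^e,M_\cb) \xrightarrow{\ \RHom_{\cb^e}(\ca^e,f)\ } D(\ca \lten_\cb \ca),
\]
with the right map an isomorphism (as $f$ is) and the unit $\eta_M$ becoming an isomorphism in $\cd(\cc^e)$ by unfolding $\RHom_{\cb^e}(\ca^e,\ca) \simeq \RHom_\cb(\ca,\RHom_\cb(\ca,\ca))$ and applying the hypothesis twice; the argument for $\psi^{-1}(f)$ is analogous after invoking the duality $\ca \simeq DD\ca$ to transfer the calculation from one factor of the enveloping algebra to the other.

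The main obstacle I anticipate is tracking the interaction between $R$-linear duality $D$ and the localization $\mathrm{L}Q^{e*}$, which do not commute in general, so that the statements involving $D\ca$ and $D(\ca \lten_\cb \ca)$ need care. This is precisely where the assumption $\ca \simeq DD\ca$ is expected to enter: it provides the self-duality required to bridge the one-sided local-cover hypothesis to the two-sided enveloping-algebra calculation, and to ensure the identifications $D\ca|_{\cb^e} \simeq D\cb$ used implicitly in Lemma~\ref{lem:adjunctions} behave compatibly on both sides.
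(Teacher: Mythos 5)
Your proposal is correct and follows essentially the same route as the paper: part~(a) via the identifications of $D\HH(-,N)$ with $\RHom(N,D(-))$ over the respective enveloping categories together with the uniqueness clause of Proposition~\ref{prop:morphism of triangles}, and part~(b) by showing that $\phi(f)$ and $\psi^{-1}(f)$ become isomorphisms through the adjunction unit/counit factorizations, using the local-cover hypothesis on one side and the biduality $\ca\simeq DD\ca$ to transfer it to the other, then concluding with the five lemma (the paper runs the five lemma in $\cd(\ca^e)$ and restricts along the localization $Q^e$ afterwards, which amounts to the same thing as your order of operations). The only step you leave schematic is the diagram chase of Lemma~\ref{lem:isomorphisms preserving} verifying that the unit and counit maps are genuinely isomorphisms, but you have correctly identified every ingredient and the role each hypothesis plays.
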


\begin{remarks} \label{rk:lift} \mbox{}
\begin{itemize}
\item[a)] The assignment $f \mapsto \Delta(f)$ is a lift of Amiot's construction~\cite{Amiot09} to the dg level. In fact, Remark~5.3.4 (based on Theorem~5.3.3) of~\cite{KellerLiu23b} shows that the connecting morphism in dual Hochschild homology is a lift of Amiot's construction to the dg level. The statement follows by combining this result with part~a) of Theorem~\ref{thm:connecting morphism}.
\item[b)] Part~b) of Theorem~\ref{thm:connecting morphism} is a lift of Theorem~1.3 of~\cite{Amiot09} to the dg level. In fact, the condition that the canonical morphism $\ca \to \RHom_{\cb}(\ca, \ca)$ is an isomorphism is a suitable dg version of the `local cover' condition, \cf Lemma~\ref{lem:local cover 1}.
\item[c)] We also give an equivalent condition at the dg level for the canonical morphism \linebreak $\ca \to \RHom_{\cb}(\ca, \ca)$ being an isomorphism, which means that the dg category $\ca$ is required to be no larger than the localizing completion of $\cb$, \cf Lemma~\ref{lem:local cover 2}.
\end{itemize}
\end{remarks}

We prepare the following lemma.

\begin{lemma} \label{lem:isomorphisms preserving}
Under the assumptions of part~b) of Theorem~\ref{thm:connecting morphism}, the bijections $\phi$ and $\psi^{-1}$ in Lemma~\ref{lem:adjunctions} preserves isomorphisms.
\end{lemma}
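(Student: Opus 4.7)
The plan is to unpack both bijections in terms of the units and counits of the adjunctions defining them, peel off $f$ (which is an isomorphism by hypothesis), and then invoke the two hypotheses of part~b) to conclude that the remaining factor is an isomorphism.

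For $\phi$, the bijection arises from the adjunction between restriction $\mathrm{R}I^e_*$ and coinduction $\RHom_{\cb^e}(\ca^e, ?)$ along $I^e$. I would write $\phi(f)$ as the composition
\[
M \xlongrightarrow{\eta_M} \RHom_{\cb^e}(\ca^e, M_\cb) \xlongrightarrow{\RHom_{\cb^e}(\ca^e, f)} \RHom_{\cb^e}(\ca^e, D\cb) \cong D(\ca \lten_\cb \ca),
\]
where $\eta$ denotes the unit of the adjunction. Since $f$ is an isomorphism, so is $\RHom_{\cb^e}(\ca^e, f)$, and the problem reduces to showing that $\eta_M$ is an isomorphism. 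Because $M \cong \ca(\nu?, -)$ is the $\nu$-twist of the diagonal bimodule $\ca$ and $\nu$ preserves $I\cb$, twisting commutes with the composite endofunctor $\RHom_{\cb^e}(\ca^e, \mathrm{R}I^e_*(?))$ and identifies $\eta_M$ with $\eta_\ca$. It therefore suffices to verify that $\eta_\ca \colon \ca \to \RHom_{\cb^e}(\ca^e, \mathrm{R}I^e_*\ca)$ is an isomorphism. The crucial step here is to identify this unit, via the Hom--tensor adjunction over the decomposition $\ca^e = \ca \ten_R \ca\op$ together with the isomorphism $\mathrm{R}I^e_*\ca \cong \cb$ in $\cd(\cb^e)$ (stemming from the quasi-fully-faithfulness of $I$), with the canonical morphism $\ca \to \RHom_\cb(\ca, \ca)$ in $\cd(\ca^e)$; this is exactly what I would formalize as Lemma~\ref{lem:local cover 1}. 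By the hypothesis of part~b), the canonical map is an isomorphism, so $\eta_M$ and hence $\phi(f)$ are isomorphisms.

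For $\psi^{-1}$ I would proceed symmetrically using the dual adjunction between induction $?\lten_{\cb^e}\ca^e$ and restriction $\mathrm{R}I^e_*$. Then $\psi^{-1}(f)$ factors as
\[
M_\cb \lten_{\cb^e} \ca^e \xlongrightarrow{f \lten \id} D\cb \lten_{\cb^e} \ca^e \cong \mathrm{R}I^e_*(D\ca)\lten_{\cb^e} \ca^e \xlongrightarrow{\epsilon_{D\ca}} D\ca,
\]
where $\epsilon$ is the counit of this adjunction. Since the first map is an isomorphism, the question reduces to showing that $\epsilon_{D\ca}$ is an isomorphism. Applying $D = \RHom_R(?, R)$ and invoking the hypothesis $\ca \cong DD\ca$ (which, by restriction, also yields $\cb \cong DD\cb$), the dual $D(\epsilon_{D\ca})$ is naturally identified, via the standard interplay between $D$ and the two adjunctions, with the unit $\eta_\ca$ shown above to be an isomorphism. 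Therefore $\epsilon_{D\ca}$ is an isomorphism, and so is $\psi^{-1}(f)$.

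The main obstacle I expect is the identification, in the middle of the $\phi$-argument, of the unit $\eta_\ca$ with the canonical map $\ca \to \RHom_\cb(\ca, \ca)$; the Hom--tensor adjunction bookkeeping for bimodules is delicate and will rely on the precise formulation of the local cover condition to be stated in Lemmas~\ref{lem:local cover 1} and~\ref{lem:local cover 2}. Once that identification is in place, the passage from $\phi$ to $\psi^{-1}$ via $R$-duality is routine given $\ca \cong DD\ca$.
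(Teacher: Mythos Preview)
Your overall strategy --- factor $\phi(f)$ and $\psi^{-1}(f)$ through the unit $\eta_M$ respectively the counit $\eps_{D\ca}$ of the two adjunctions, then peel off $f$ --- is exactly how the paper begins. The gap is in the step you yourself flag as the ``main obstacle'': the claimed identification of $\eta_\ca$ with the canonical map $\ca \to \RHom_\cb(\ca,\ca)$ is not correct. The Hom--tensor adjunction over $\ca^e = \ca\ten_R\ca\op$ together with $\mathrm{R}I^e_*\ca\simeq\cb$ gives
\[
\RHom_{\cb^e}(\ca^e,\cb)\;\simeq\;\RHom_{\cb}\bigl(\ca,\RHom_{\cb\op}(\ca,\cb)\bigr),
\]
not $\RHom_\cb(\ca,\ca)$. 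To land in $\RHom_\cb(\ca,\ca)$ you would need $\RHom_{\cb\op}(\ca,\cb)\simeq\ca$, i.e.\ the \emph{opposite}-sided local cover condition, which is not among the hypotheses. Consequently your argument for $\phi$ never invokes the reflexivity hypothesis $\ca\iso DD\ca$, whereas the paper's proof uses it essentially even for $\phi$; this is a clear signal that a direct identification cannot work.

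What the paper does instead is a two-step bootstrap. It sets up a commutative diagram in $\cd(\ca^e)$ relating $\eta_M$ to $\RHom_\cb(\ca,D\eta_\ca)$ through the reflexivity isomorphism $M\iso DDM$ and the given isomorphism $f$. One first restricts this diagram to $\ca\ten_R\cb\op$: there the second variable already lies in $\cb$, so $D\eta_\ca$ restricted is an isomorphism by quasi-full-faithfulness of $I$, and the diagram forces the restriction of $\eta_M$ to be an isomorphism. Since $M\simeq\ca(\nu?,-)$, this partial result translates into $D\eta_\ca$ being an isomorphism in $\cd(\cb\ten_R\ca\op)$, which one then feeds back into the same diagram in $\cd(\ca^e)$ to conclude that $\eta_M$ is an isomorphism. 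The argument for $\eps_{D\ca}$ follows the same bootstrap pattern with an analogous diagram; your idea of dualising $\eps_{D\ca}$ and comparing with $\eta_\ca$ is close in spirit, but again the actual proof proceeds via restriction to $\ca\ten_R\cb\op$ first rather than a single global identification.
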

\begin{proof}
Let $f \colon M_\cb \to D\cb$ be an isomorphism in $\cd(\cb^e)$. We first prove that the morphism $\phi(f) \colon M \to D(\ca \lten_{\cb} \ca)$ in $\cd(\ca^e)$ is an isomorphism. By Lemma~\ref{lem:adjunctions}, it suffices to show that the morphism $\eta_M \colon M \to \RHom_{\cb^e}(\ca^e, M_\cb)$, determined by the unit of the adjunction $(? \lten_{\ca^e}\ca^e, \RHom_{\cb^e}(\ca^e, ?))$, is an isomorphism. Since the dg $\ca$-bimodule $M$ is isomorphic to $\ca(\nu ?, -)$ in $\cd(\ca^e)$ and the canonical morphism $\ca \to \RHom_{\cb}(\ca, \ca)$ in $\cd(\ca^e)$ is an isomorphism, so is the canonical morphism $M \to \RHom_{\cb}(\ca, M)$ in $\cd(\ca^e)$. Moreover, the object $\RHom_{\cb^e}(M\ten_R \ca, M_\cb)$ is isomorphic to $\RHom_{\cb^e}(\ca^e, \cb)$ in $\cd(\ca \ten_R \cb\op)$. By diagram chasing, the diagram
\begin{equation} \label{eq:unit diagram}
\begin{tikzcd}
M \arrow{rrr}{\eta_M} \arrow{d}{\wr} & & & _{\cb^e}(\ca^e, M_\cb) \arrow{dd}{_{\cb^e}(\ca^e, f)}[swap]{\wr} \\
_{\cb}(\ca, M) \arrow{d}{\wr} & & & \\
_{\cb}(\ca, DDM) & & & _{\cb^e}(\ca^e, D\cb) \\
_{\cb}(\ca, D_{\cb^e}(M\ten_R A, D\cb)) \arrow[swap]{u}{\wr}\arrow{d}{\wr}[swap]{_{\cb}(\ca, D_{\cb^e}(M\ten_R \ca, f))} & & & \\
_{\cb}(\ca, D_{\cb^e}(M\ten_R A, M_\cb)) \arrow[no head]{r}{\sim} & _{\cb}(\ca, D_{\cb^e}(\ca^e, \cb)) \arrow[swap]{rr}{_{\cb}(\ca, D\eta_\ca)} & & _{\cb}(\ca, D\ca) \arrow{uu}{\wr}
\end{tikzcd}
\end{equation}
in $\cd(\ca^e)$ is commutative. In this diagram, we write $_{\cb}(?,-)$ for $\RHom_{\cb}(?,-)$ and similarly for $_{\cb^e}(?,-)$.

We claim that the restriction of the morphism $\eta_M$ to $\ca \ten_R \cb \op$ is an isomorphism. Since the restriction of the dg $\ca^e$-module $\ca$ to $\cb^e$ is isomorphic to $\cb$ in $\cd(\cb^e)$, the restriction of the morphism $\RHom_{\cb}(\ca, D\eta_\ca)$ to $\ca \ten_R \cb \op$ is an isomorphism. By the commutativity of the restriction of the diagram~(\ref{eq:unit diagram}) to $\ca \ten_R \cb \op$, the restriction of the morphism $\eta_M$ to $\ca \ten_R \cb \op$ is an isomorphism. Since the dg $\ca$-bimodule $M$ is isomorphic to $\ca(\nu ?, -)$ in $\cd(\ca^e)$, this claim shows that the morphism $D\eta_\ca$ in $\cd(\cb \ten_R \ca \op)$ is an isomorphism and hence so is the morphism $\RHom_{\cb}(\ca, D\eta_\ca)$ in $\cd(\ca^e)$. Then by the commutativity of the diagram~(\ref{eq:unit diagram}), the morphism $\eta_M$ is an isomorphism. We conclude that the morphism $\phi(f)$ is an isomorphism.

We now prove that the morphism $\psi^{-1}(f) \colon \ca \lten_{\cb}M_\cb \lten_{\cb}\ca \to D\ca$ is an isomorphism. By Lemma~\ref{lem:adjunctions}, it suffices to show that the morphism $\eps_{D\ca} \colon \ca \lten_\cb D\cb \lten_\cb \ca \to D\ca$, determined by the counit of the adjunction $(?\lten_{\cb^e}\ca^e, \RHom_{\ca^e}(\ca^e, ?))$, is an isomorphism. Since the dg $\ca$-bimodule $M$ is isomorphic to $\ca(\nu ?, -)$ in $\cd(\ca^e)$, the object $\RHom_\cb(M, M)$ is isomorphic to $\RHom_\cb(\ca, \ca)$ in $\cd(\ca^e)$ and the object $\ca \lten_\cb M_\cb$ is isomorphic to $M$ in $\cd(\cb \ten \ca\op)$. By diagram chasing, the diagram
\begin{equation} \label{eq:counit diagram}
\begin{tikzcd}
\ca \arrow[swap]{rr}{\sim} \arrow{d}{\wr} & & DD\ca \arrow{rr}{D\eps_{D\ca}} & & D(D\cb \lten_{\cb^e}\ca^e) \arrow{d}{D(f\lten_{\cb^e}\ca^e)}[swap]{\wr} \\
_\cb(\ca, \ca) \arrow[no head]{d}{\wr} & & & & D(M_\cb \lten_{\cb^e}\ca^e) \arrow[no head, swap]{d}{\wr} \\
_\cb(M, M) \arrow[no head]{rr}{\sim}[swap]{_\cb(M, \ca \lten_{\cb}f)} & & _\cb(M, \ca \lten_\cb D\cb) \arrow[no head, swap]{rr}{_\cb(M, \eps_{D\ca})} & & _\cb(M, D\ca)
\end{tikzcd}
\end{equation}
in $\cd(\ca^e)$ is commutative. In this diagram, we write $_{\cb}(?,-)$ for $\RHom_{\cb}(?,-)$.

We claim that the restriction of the morphism $D\eps_{D\ca}$ to $\ca \ten_R \cb \op$ is an isomorphism. Since the restriction of the dg $\ca^e$-module $\ca$ to $\cb^e$ is isomorphic to $\cb$ in $\cd(\cb^e)$, the restriction of the morphism $\RHom_{\cb}(M, \eps_{D\ca})$ to $\ca \ten_R \cb \op$ is an isomorphism. By the commutativity of the restriction of the diagram~(\ref{eq:counit diagram}) to $\ca \ten_R \cb \op$, the restriction of the morphism $D\eps_{D\ca}$ to $\ca \ten_R \cb \op$ is an isomorphism. Now this claim shows that the morphism $\eps_{D\ca}$ in $\cd(\cb \ten_R \ca \op)$ is an isomorphism and hence so is the morphism $\RHom_{\cb}(M, \eps_{D\ca})$ in $\cd(\ca^e)$. Then by the commutativity of the diagram~(\ref{eq:counit diagram}), the morphism $D\eps_{D\ca}$ is an isomorphism and hence so is $\eps_{D\ca}$. We conclude that the morphism $\psi^{-1}(f)$ is an isomorphism.
\end{proof}

\begin{proof}[Proof of Theorem~\ref{thm:connecting morphism}]
a) By part~b) of Proposition~\ref{prop:morphism of triangles}, the map $\bar{\Delta}$ is determined by the property that $\bar{\Delta}(f)$ makes the middle and right squares of the diagram~(\ref{eq:morphism of triangles}) commutative for all $f\colon M_\cb \to D\cb$ in $\cd(\cb^e)$. Therefore, the map $\Delta$ fits into the following commutative diagram.
\[
\begin{tikzcd}
_{\cb^e}(M_\cb, D\cb) \arrow[no head]{r}{\sim} \arrow[no head]{d}{\wr} \arrow{ddrr}{\Delta} & _{\ca^e}(\ca \lten_{\cb}M_\cb \lten_{\cb} \ca, D\ca) \arrow{dr} & \\
_{\ca^e}(M, D(\ca \lten_{\cb} \ca)) \arrow{dr} & & _{\ca^e}(\Si^{-1}M^\cc, D\ca) \\
 & _{\ca^e}(M, \Si D\cc) & _{\cc^e}(M^\cc, \Si D\cc) \arrow[swap]{u}{\wr} \arrow[swap]{l}{\sim}
\end{tikzcd}
\]
In this diagram, we write $_{\ca^e}(?,-)$ for $\Hom_{\cd(\ca^e)}(?,-)$ and similarly for $_{\cb^e}(?,-)$ and $_{\cc^e}(?,-)$. Via adjunctions, this diagram is isomorphic to the following one.
\[
\adjustbox{max width=\textwidth}{
\begin{tikzcd}
H^0(D(M_\cb \lten_{\cb^e} \cb)) \arrow[no head]{r}{\sim} \arrow[no head]{d}{\wr} \arrow{ddrr}{\delta} & H^0(D((\ca \lten_{\cb}M_\cb \lten_{\cb} \ca)\lten_{\ca^e}\ca)) \arrow{dr} & \\
H^0(D(M \lten_{\ca^e}(\ca \lten_{\cb} \ca))) \arrow{dr} & & H^1(D(M^\cc\lten_{\ca^e} \ca)) \\
 & H^1(D(M \lten_{\ca^e} \cc)) & H^1(D(M^\cc \lten_{\cc^e} \cc)) \arrow[swap]{u}{\wr} \arrow[swap]{l}{\sim}
\end{tikzcd}
}
\]
This shows that the connecting morphism $\delta$ and the map $\Delta$ are compatible via adjunctions.

b) This is an immediate consequence of part~a), Lemma~\ref{lem:isomorphisms preserving}, and Proposition~\ref{prop:morphism of triangles}.
\end{proof}

We give some remarks concerning the local cover condition.

\begin{lemma}[\cite{HaniharaKeller25}] \label{lem:local cover 1}
Let $\ca$ and $\cb$ be pretriangulated dg categories and $I\colon \cb \to \ca$ a quasi-fully faithful dg functor. Suppose that for any objects $X$ and $Y$ in $H^0(\ca)$, we have a local $H^0(I)$-cover of $X$ relative to $Y$ in the sense of Definition~1.2 of \cite{Amiot09}. Then the canonical morphism $\ca \to \RHom_{\cb}(\ca, \ca)$ in $\cd(\ca^e)$ is an isomorphism.
\end{lemma}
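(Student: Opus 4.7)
The plan is to verify the claimed isomorphism object-wise: a morphism in $\cd(\ca^e)$ is an isomorphism if and only if its evaluation at every pair of objects is a quasi-isomorphism in $\cd(R)$, so it suffices to show that for each $X, Y \in \ca$, the canonical comparison
\[
\ca(X, Y) \longrightarrow \RHom_\cb(\ca(-, X)|_\cb, \ca(-, Y)|_\cb)
\]
is a quasi-isomorphism in $\cd(R)$. Here $|_\cb$ denotes restriction along $I$. Fixing such $X$ and $Y$, I would apply the local cover hypothesis to produce a triangle $B \to B' \to X \to \Si B$ in $H^0(\ca)$ with $B$ and $B'$ coming from $H^0(\cb)$ via $H^0(I)$. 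Since $\ca$ is pretriangulated, this triangle lifts to an actual cone construction inside the dg category $\ca$.

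Applying the dg functor $\ca(-, Y)$ to this lift yields the triangle
\[
\ca(X, Y) \longrightarrow \ca(B', Y) \longrightarrow \ca(B, Y) \longrightarrow \Si \ca(X, Y)
\]
in $\cd(R)$. Restricting the triangle to $\cb$ produces a triangle $\ca(-, B)|_\cb \to \ca(-, B')|_\cb \to \ca(-, X)|_\cb \to \Si \ca(-, B)|_\cb$ of dg right $\cb$-modules, and applying the contravariant $\RHom_\cb(-, \ca(-, Y)|_\cb)$ yields a second triangle in $\cd(R)$ starting with $\RHom_\cb(\ca(-, X)|_\cb, \ca(-, Y)|_\cb)$. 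The canonical natural transformation $\ca \to \RHom_\cb(\ca, \ca)$ assembles these two triangles into a morphism of triangles in $\cd(R)$.

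The key identification is the Yoneda lemma combined with quasi-full-faithfulness of $I$: for any $B_0 \in \cb$, the restriction $\ca(-, I(B_0))|_\cb$ is quasi-isomorphic to the representable $\cb(-, B_0)$, so $\RHom_\cb(\ca(-, I(B_0))|_\cb, \ca(-, Y)|_\cb) \simeq \ca(I(B_0), Y)$. Under this identification, the comparison map restricts to the identity (up to the canonical quasi-isomorphisms) on the middle and right vertices of the two triangles. By the triangulated two-out-of-three principle, it must then be a quasi-isomorphism on the remaining vertex, which is precisely the comparison map we sought. The main delicate point will be verifying coherence: that the canonical natural transformation $\ca \to \RHom_\cb(\ca, \ca)$, evaluated on the two triangles, genuinely intertwines them and that the Yoneda identifications on the middle and right vertices are compatible with the maps of the triangles. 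All the underlying dg and derived-functorial manipulations are standard; the work lies in setting up the comparison diagrammatically and reading off naturality.
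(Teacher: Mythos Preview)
Your argument contains a genuine gap stemming from a misreading of Amiot's Definition~1.2. A local $H^0(I)$-cover of $X$ relative to $Y$ is a morphism $H^0(I)B_0 \to X$ with $B_0 \in H^0(\cb)$ such that the induced map $H^0(\ca)(X,Y) \to H^0(\ca)(H^0(I)B_0,Y)$ is \emph{injective}. It does \emph{not} assert that $X$ sits in a triangle $B \to B' \to X \to \Si B$ with both $B$ and $B'$ in the image of $H^0(I)$; the cone of the covering map is an arbitrary object of $H^0(\ca)$. Consequently your ``triangulated two-out-of-three'' step cannot be carried out, because the middle and right vertices of your triangle are not known to lie in the image of $H^0(I)$.

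There is a second, related issue: the local cover is taken \emph{relative to a fixed $Y$}, and the injectivity it provides concerns only the single group $H^0(\ca)(X,Y)$. If you want a quasi-isomorphism of complexes $\ca(X,Y) \to \RHom_\cb(\ca(-,X)|_\cb,\ca(-,Y)|_\cb)$ you need information in every cohomological degree $p$, but the covering object $B_0$ will in general depend on $p$ (one takes the cover relative to $\Si^p Y$). The paper's proof addresses both points: for fixed $X$, $Y$ and $p$, it takes a local cover $H^0(I)B_0 \to X$ relative to $\Si^p Y$, completes to a triangle $X_1 \to H^0(I)B_0 \to X \to \Si X_1$, then takes a \emph{second} local cover $H^0(I)B_1 \to X_1$ relative to $\Si^p Y$. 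This produces a length-three exact sequence
\[
0 \to H^0(\ca)(X,\Si^p Y) \to H^0(\ca)(H^0(I)B_0,\Si^p Y) \to H^0(\ca)(H^0(I)B_1,\Si^p Y)
\]
which maps to the corresponding sequence on the $\cb$-side; the Five Lemma (rather than two-out-of-three on a triangle) then gives bijectivity in degree $p$. Your proposal can be repaired along exactly these lines, but as written it relies on a conclusion the hypothesis does not supply.
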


\begin{proof}
For an object $X$ in $H^0(\ca)$, we write $X^\wg$ for the dg $\ca$-module $\ca(?, X)$. It suffices to show that for any objects $X$ and $Y$ in $H^0(\ca)$ and integer $p$, the induced map
\[
\begin{tikzcd}
\Hom_{\cd(\ca)}(X^\wg, (\Si^p Y)^\wg) \arrow{r} & \Hom_{\cd(\cb)}(X^\wg, (\Si^p Y)^\wg)
\end{tikzcd}
\]
in homology is bijective. Let $H^0(I)B_0 \to X$ be a local $H^0(I)$-cover of $X$ relative to $\Si^p Y$. We complete it to the triangle
\[
\begin{tikzcd}
X_1 \arrow{r} & H^0(I)B_0 \arrow{r} & X \arrow{r} & \Si X_1
\end{tikzcd}
\]
in $H^0(\ca)$. Let $H^0(I)B_1 \to X_1$ be a local $H^0(I)$-cover of $X_1$ relative to $\Si^p Y$. Then we have the exact sequence
\[
\begin{tikzcd}
0 \arrow{r} & H^0(\ca)(X, \Si^p Y) \arrow{r} & H^0(\ca)(H^0(I)B_0, \Si^p Y) \arrow{r} & H^0(\ca)(H^0(I)B_1, \Si^p Y) \: .
\end{tikzcd}
\]
It yields the following commutative diagram
\[
\begin{tikzcd}
0 \arrow{r} & _{\ca}(X^\wg, (\Si^p Y)^\wg) \arrow{r} \arrow{d} & _{\ca}((H^0(I)B_0)^\wg, (\Si^p Y)^\wg) \arrow{r} \arrow{d}{\wr} & _{\ca}((H^0(I)B_1)^\wg, (\Si^p Y)^\wg) \arrow{d}{\wr} \\
0 \arrow{r} & _{\cb}(X^\wg, (\Si^p Y)^\wg) \arrow{r} & _{\cb}(B_0^\wg, (\Si^p Y)^\wg) \arrow{r} & _{\cb}(B_1^\wg, (\Si^p Y)^\wg)
\end{tikzcd}
\]
with exact rows. In this diagram, we write $_{\ca}(?,-)$ for $\Hom_{\cd(\ca)}(?,-)$ and similarly for $_{\cb}(?,-)$. Since the middle and right vertical maps are bijective, by the Five Lemma, so is the left one.
\end{proof}

\begin{lemma} \label{lem:local cover 2}
Let $\ca$ and $\cb$ be dg categories and $I\colon \cb \to \ca$ a dg functor. Then the following are equivalent.
\begin{itemize}
\item[i)] The dg functor $I$ is quasi-fully faithful and the the restriction functor
\[
\mathrm{R}I_* \colon \per \ca \to \cd(\cb)
\]
is fully faithful.
\item[ii)] There exists a quasi-fully faithful dg functor $J\colon \ca \to \cd_{dg}(\cb)$ such that the composition $J\circ I$ is the dg Yoneda functor.
\end{itemize}
\end{lemma}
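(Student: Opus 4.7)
The plan is to establish the two implications separately, treating $\mathrm{R}I_*$ as the restriction functor sending a dg $\ca$-module $M$ to the composition $M \circ I$.

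For (ii)$\Rightarrow$(i), I would first deduce that $I$ is quasi-fully faithful: the dg Yoneda functor $Y_{\cb}\colon \cb \to \cd_{dg}(\cb)$ is quasi-fully faithful and, by hypothesis, factors as $J \circ I$ with $J$ quasi-fully faithful, so $I$ must be quasi-fully faithful as well. The heart of the direction is to identify $J(A)$ with $\mathrm{R}I_*\, \ca(?, A) = \ca(I?, A)$ in $\cd(\cb)$ for each $A \in \ca$. I would do this by evaluating at $B \in \cb$ and applying the derived Yoneda lemma:
\[
J(A)(B) \simeq \RHom_{\cb}(\cb(?, B), J(A)) = \cd_{dg}(\cb)(J(IB), J(A)) \simeq \ca(IB, A),
\]
where the middle equality uses $J \circ I = Y_{\cb}$ and the right quasi-isomorphism uses the quasi-fully faithfulness of $J$. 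Since these identifications are natural in both $A$ and $B$, one obtains a natural quasi-isomorphism $J(A) \simeq \ca(I?, A)$ in $\cd(\cb)$; combining with the quasi-fully faithfulness of $J$ then yields $\ca(A, A') \xrightarrow{\sim} \RHom_{\cb}(\ca(I?, A), \ca(I?, A'))$ for all $A, A' \in \ca$, which is the full faithfulness of $\mathrm{R}I_*$ on the generating representables $\ca(?, A)$. A standard thick-subcategory argument (both sides are cohomological in each variable and send triangles to triangles, and the thick closure of representables in $\cd(\ca)$ is $\per \ca$) extends this to all of $\per \ca$.

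For (i)$\Rightarrow$(ii), I would define $J = \mathrm{R}I_* \circ Y_{\ca}\colon \ca \to \cd_{dg}(\cb)$, i.e.\ $A \mapsto \ca(I?, A)$, composed with an h-projective or h-injective replacement if the chosen model of $\cd_{dg}(\cb)$ requires it. The quasi-fully faithfulness of $J$ is then precisely the full faithfulness of $\mathrm{R}I_*$ on representable modules, which is guaranteed by (i). The image $J(IB) = \ca(I?, IB)$ receives a canonical morphism from $\cb(?, B) = Y_{\cb}(B)$ induced by $I$, and this is a quasi-isomorphism of dg $\cb$-modules exactly because $I$ is quasi-fully faithful. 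This yields a natural quasi-isomorphism $J \circ I \simeq Y_{\cb}$ in $\cd_{dg}(\cb)$; to arrange strict equality, one passes up to quasi-equivalence to a model of $\ca$ in which $I$ is the inclusion of a full dg subcategory.

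The step I expect to be the main technical obstacle is the identification $J(A) \simeq \mathrm{R}I_*\, \ca(?, A)$ in the (ii)$\Rightarrow$(i) direction. Although $J$ is prescribed only on the image of $I$, reconstructing $J(A)$ for a general $A$ via the derived Yoneda lemma and the quasi-fully faithfulness of $J$ requires care with the dg enhancement, in particular to ensure that the identifications are natural enough in $A$ to commute with the canonical comparison maps needed to run the thick-subcategory extension.
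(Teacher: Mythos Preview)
Your proposal is correct and follows essentially the same outline as the paper: for i)$\Rightarrow$ii) both you and the paper take $J$ to be the composite of the Yoneda embedding $\ca\to\per\!_{dg}\ca$ with restriction along $I$, and for ii)$\Rightarrow$i) both deduce quasi-full faithfulness of $I$ from that of $J$ and $J\circ I$, then argue that $\mathrm{R}I_*$ is fully faithful on representables and extend by the thick-subcategory argument.

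The one noteworthy difference is in how ii)$\Rightarrow$i) handles the identification you flagged as the main technical obstacle. Rather than computing $J(A)(B)$ pointwise via the derived Yoneda lemma and then worrying about naturality, the paper uses the adjunction $(\mathrm{L}J^*,\mathrm{R}J_*)$: quasi-full faithfulness of $J$ makes the unit $\id_{\cd(\ca)}\to\mathrm{R}J_*\circ\mathrm{L}J^*$ an isomorphism, yielding $\mathrm{R}I_*\simeq\mathrm{R}(J\circ I)_*\circ\mathrm{L}J^*$; since $J\circ I$ is the Yoneda functor, $\mathrm{R}(J\circ I)_*$ is a quasi-inverse to the canonical equivalence $\cd(\cb)\xrightarrow{\sim}\per(\cd_{dg}(\cb))$, and a commutative square shows that $\mathrm{R}I_*$ on $\per\ca$ is, up to this equivalence, just $\mathrm{L}J^*$, whose full faithfulness on the generators follows from that of $H^0(J)$. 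This functorial packaging bypasses the naturality verification entirely, which is exactly the step you were worried about; your direct approach works too, but the paper's is cleaner here.
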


\begin{proof}
Let us prove the implication from i) to ii). Let $J$ be the composed dg functor
\[
\begin{tikzcd}
\ca \arrow{r} & \per\!_{dg}\ca \arrow{r} & \cd_{dg}(\cb) \: ,
\end{tikzcd}
\]
where the first dg functor is determined by the dg Yoneda functor and the second dg functor is determined by restriction. Since the restriction functor $\mathrm{R}I_* \colon \per \ca \to \cd(\cb)$ is fully faithful, the dg functor $J$ is quasi-fully faithful. Because the dg functor $I$ is quasi-fully faithful, the composed dg functor $J\circ I$ is naturally isomorphic to the dg Yoneda functor. This implies the statement.

We now prove the implication from ii) to i). The quasi-full faithfulness of the dg functor $I$ follows from that of $J\circ I$ and $J$. Since the dg functor $J$ is quasi-fully faithful, the unit \linebreak $\id_{\cd(\ca)} \iso \mathrm{R}J_* \circ \mathrm{L}J^*$ is a natural isomorphism. So we have the composed natural isomorphism
\[
\begin{tikzcd}
\mathrm{R}I_* \arrow{r}{\sim} & \mathrm{R}I_* \circ \mathrm{R}J_* \circ \mathrm{L}J^* & \mathrm{R}(J \circ I)_* \circ \mathrm{L}J^* \arrow[swap]{l}{\sim} \: .
\end{tikzcd}
\]
We have the commutative square
\[
\begin{tikzcd}
H^0(\ca) \arrow{r}{H^0(J)} \arrow{d} & \cd(\cb) \arrow{d}{\wr} \\
\per \ca \arrow[swap]{r}{\mathrm{L}J^*} & \per(\cd_{dg}(\cb)) \mathrlap{\: ,}
\end{tikzcd}
\]
where the vertical functors are induced by the dg Yoneda functors. Since the functor $\mathrm{R}(J \circ I)_*$ is the quasi-inverse of the functor $\cd(\cb) \iso \per (\cd_{dg}(\cb))$ and $\per \ca$ is the thick subcategory generated by the image of the functor $H^0(\ca) \to \per \ca$, the full faithfulness of the restriction functor $\mathrm{R}I_* \colon \per \ca \to \cd(\cb)$ follows from that of the functor $H^0(J)$.
\end{proof}

\section{Calabi--Yau structures on bounded derived categories and singularity categories}

We construct Calabi--Yau structures on derived and singularity categories of symmetric $R$-orders. This is done by first constructing Calabi--Yau structures over the commutative base ring $R$, and then transferring them to Calabi--Yau structures over the base field $k$ via the `base change' formulas formulated in Propositions~\ref{prop:right base change} and \ref{prop:left base change}. It is therefore essential to distinguish the base rings explicitly. For a commutative ring $R$ and a dg $R$-category $\ca$, we write $D_R$ for the functor $\RHom_R(?, R)$ and $\ca^e_R$ for the dg category $\ca \ten_R \ca \op$. We write $\HH(\ca /R)$ and $\HC(\ca /R)$ for the Hochschild complex respectively the cyclic complex over $R$ of $\ca$. Unadorned above notation is over $k$.

\subsection{Right Calabi--Yau structures on $\sg\!_{dg}\La$}

We now study the dg singularity categories associated with symmetric orders. The first observation is standard.

\begin{lemma} \label{lem:bijectivity}
Let $R$ be a commutative ring and $A$ a connective dg $R$-algebra. Then the canonical map
\[
H^p(D_R \HC(A/R)) \longrightarrow H^p(D_R \HH(A/R))
\]
is bijective for all non-positive integers $p$.
\end{lemma}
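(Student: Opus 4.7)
The plan is to exploit the connectivity of $A$ to show that both $\HH(A/R)$ and $\HC(A/R)$ are concentrated in non-positive cohomological degrees, whence their $R$-linear duals sit in non-negative degrees. This makes the map trivially bijective for $p<0$, and the case $p=0$ then follows from the ISB triangle.

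First I would replace $A$ by a quasi-isomorphic dg $R$-algebra that is flat over $R$ and satisfies $A^i=0$ for $i>0$. Under this reduction, the Hochschild bicomplex
\[
\cdots \to A^{\ten_R 3} \to A^{\ten_R 2} \to A,
\]
with $A^{\ten_R(p+1)}$ placed in external cohomological degree $-p$, has every summand sitting in non-positive total degree; hence its sum total complex $\HH(A/R)$ lives in cohomological degrees $\leq 0$. For the cyclic complex I would use the explicit model $\HC(A/R) = \mathrm{M}(A) \ten_\La \bp R$, where $\bp R = \bigoplus_{n\geq 0}\Si^{2n}\La\cdot e_n$ (with $|e_n|=-2n$ and $d(e_n) = t\cdot e_{n-1}$) is the minimal cofibrant resolution of $R$ over $\La = R[t]/(t^2)$, $|t|=-1$. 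This yields the graded-$R$-module identification $\HC(A/R) \cong \bigoplus_{n\geq 0}\Si^{2n}\mathrm{M}(A)$; each summand sits in cohomological degrees $\leq -2n \leq 0$, so $\HC(A/R)$ is non-positive as well.

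Next, for any complex $X$ of $R$-modules concentrated in non-positive cohomological degrees, $D_R X$ sits in non-negative degrees. Indeed, against a K-injective resolution $R\to I^\bullet$ with $I^i=0$ for $i<0$, the component $\Hom_R(X,I^\bullet)^p = \prod_q \Hom_R(X^q, I^{q+p})$ vanishes unless $q\leq 0$ and $q+p\geq 0$, which forces $p\geq 0$. Applied to $X=\HH(A/R)$ and $X=\HC(A/R)$, this shows $H^p(D_R\HH(A/R)) = H^p(D_R\HC(A/R)) = 0$ for all $p<0$, so the map in the lemma is trivially bijective there.

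For the remaining case $p=0$, I would dualize the ISB triangle to obtain
\[
\Si^{-1} D_R\HH(A/R) \to \Si^{-2} D_R\HC(A/R) \to D_R\HC(A/R) \to D_R\HH(A/R)
\]
in $\cd(R)$ and inspect the relevant portion of its long exact sequence,
\[
H^{-2}(D_R\HC(A/R)) \to H^0(D_R\HC(A/R)) \to H^0(D_R\HH(A/R)) \to H^{-1}(D_R\HC(A/R)).
\]
Both flanking terms vanish by the previous step, forcing the middle arrow to be bijective. The one step requiring care is pinning down the minimal cofibrant resolution $\bp R$ precisely enough to secure the non-positivity of $\HC(A/R)$; everything else is routine bookkeeping with the ISB triangle.
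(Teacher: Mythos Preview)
Your argument is correct and rests on the same two ingredients as the paper's proof: the connectivity of $A$ forces $H^q(D_R\HC(A/R))=0$ for $q<0$, and then the dualized ISB long exact sequence
\[
H^{p-2}(D_R\HC)\longrightarrow H^p(D_R\HC)\longrightarrow H^p(D_R\HH)\longrightarrow H^{p-1}(D_R\HC)
\]
does the rest. The only difference is organizational: the paper applies this exact sequence uniformly for all $p\leq 0$, whereas you split off the case $p<0$ and handle it by showing both sides vanish (which requires the extra, though correct, verification that $D_R\HH(A/R)$ also sits in non-negative degrees). Your route gives slightly more information but is less economical; neither the case split nor the Hochschild vanishing is actually needed once you have the cyclic vanishing and the long exact sequence.
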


\begin{proof}
For simplicity, we omit the $R$ in the adorned notation to write $D$, $\HH$, and $\HC$. If we apply the triangle functor $D=\RHom_R(?, R)$ to the ISB triangle
\[
\begin{tikzcd}
  \HH(A)\arrow{r}{I} & \HC(A)\arrow{r}{S} & \Si^2 \HC(A)\arrow{r}{B} & \Si \HH(A)
\end{tikzcd}
\]
and then take homology, we obtain the long exact sequence
\[
\begin{tikzcd}
H^{p-2}(D\HC(A)) \arrow{r} & H^p(D\HC(A)) \arrow{r} & H^p(D\HH(A)) \arrow{r} & H^{p-1}(D\HC(A)) \: .
\end{tikzcd}
\]
Since the dg algebra $A$ is connective, the homology of the complex $D\HC(A)$ vanishes in all negative degrees. Then this implies the statement.
\end{proof}

Recall that a (not necessarily commutative) noetherian ring is {\em Iwanaga--Gorenstein} if it has finite injective dimension as both a left and a right module over itself. For an Iwanaga--Gorenstein ring $\La$, the singularity category $\sg \La$ is triangle equivalent to the stable category of the category $\Gproj \La$ of finitely generated Gorenstein projective $\La$-modules. The above lemma yields the following refinement of Theorem~3.3 of~\cite{HaniharaIyama22}.

\begin{proposition} \label{prop:right CY structure}
Let $R$ be a commutative Gorenstein ring and $\La$ a symmetric $R$-order. Then the dg category $\sg\!_{dg}\La$ carries a right $(-1)$-Calabi--Yau structure over $R$.
\end{proposition}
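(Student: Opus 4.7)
The plan is to invoke Theorem~\ref{thm:connecting morphism} applied to the exact sequence
\[
0 \to \cb \to \ca \to \cc \to 0
\]
of dg $R$-categories, with $\cb = \per\!_{dg}\La$, $\ca = \cd^b_{dg}(\mod \La)$, $\cc = \sg\!_{dg}\La$, taking $\nu = \id$ and $M = \ca$ (so that $M \cong \ca(\nu?, -)$ in $\cd(\ca^e_R)$ tautologically). With these choices $M_\cb \cong \cb$ in $\cd(\cb^e_R)$, since the inclusion $I$ is quasi-fully faithful, and $M^\cc \cong \cc$ in $\cd(\cc^e_R)$. Part~a) then supplies the connecting morphism
\[
\delta \colon H^0(D_R \HH(\cb/R)) \longrightarrow H^1(D_R \HH(\cc/R)),
\]
and the parallel construction on cyclic complexes over $R$ (using that the ISB triangle and the Hochschild triangle of Section~3 assemble naturally) yields a compatible connecting morphism
\[
\delta_{HC} \colon H^0(D_R \HC(\cb/R)) \longrightarrow H^1(D_R \HC(\cc/R)).
\]

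The starting class is furnished by the symmetric structure: the defining isomorphism $\La \iso D_R \La$ in $\cd(\La \ten_R \La\op)$ transports by Morita equivalence to a non-degenerate class $[\xi_\cb] \in H^0(D_R \HH(\cb/R))$. Since $\La$ is concentrated in degree zero, hence connective, Lemma~\ref{lem:bijectivity} yields a bijection $H^0(D_R \HC(\cb/R)) \iso H^0(D_R \HH(\cb/R))$, so $[\xi_\cb]$ has a unique lift $[\tilde{\xi}_\cb] \in H^0(D_R \HC(\cb/R))$. The candidate right $(-1)$-Calabi--Yau structure over $R$ on $\sg\!_{dg}\La$ is then $\delta_{HC}([\tilde{\xi}_\cb])$, whose image in $H^1(D_R \HH(\cc/R))$ equals $\delta([\xi_\cb])$.

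To conclude, I apply part~b) of Theorem~\ref{thm:connecting morphism}, which requires that both $\ca \to D_R D_R \ca$ and $\ca \to \RHom_\cb(\ca, \ca)$ are isomorphisms in $\cd(\ca^e_R)$. For the latter, Lemma~\ref{lem:local cover 2} reduces the task to exhibiting a quasi-fully faithful dg functor $J \colon \ca \to \cd_{dg}(\cb)$ with $J \circ I$ the dg Yoneda functor of $\cb$; this is supplied by the dg enhancement of the fully faithful inclusion $\cd^b(\mod \La) \hookrightarrow \cd(\La)$ post-composed with the Morita quasi-equivalence $\cd_{dg}(\La) \simeq \cd_{dg}(\cb)$. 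For the former, one uses that $\La$ is module-finite over the noetherian Gorenstein ring $R$, so that Hom-complexes in $\ca$ have finitely generated $R$-cohomology, and then propagates reflexivity from $\La$ (where it holds tautologically by $\La \cong D_R \La$) to arbitrary pairs of objects of $\cd^b(\mod \La)$ using the symmetric structure.

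The main obstacle lies in the reflexivity condition $\ca \iso D_R D_R \ca$: although each Hom-complex of $\ca$ is level-wise finitely generated over $R$, it is typically unbounded when $\La$ has infinite global dimension (the interesting case for singularity theory), so one cannot simply cite the duality of $D_R$ on $\cd^b(\mod R)$. Once both hypotheses of Theorem~\ref{thm:connecting morphism}~b) are in place, the class $\delta([\xi_\cb])$ is non-degenerate, so $\delta_{HC}([\tilde{\xi}_\cb])$ is the desired right $(-1)$-Calabi--Yau structure over $R$ on $\sg\!_{dg}\La$.
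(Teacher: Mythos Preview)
Your proof is correct and follows the same route as the paper's: apply Theorem~\ref{thm:connecting morphism}~b) to the exact sequence with $M=\ca$, lift the symmetric class on $\cb\simeq\per\!_{dg}\La$ to $H^0(D_R\HC(\cb/R))$ via Lemma~\ref{lem:bijectivity}, and push it along the connecting morphism in dual cyclic homology. The obstacle you flag---reflexivity $\ca \to D_R D_R \ca$ when the Hom-complexes are cohomologically unbounded---dissolves once you invoke Grothendieck's biduality for dualizing complexes: since $R$ is Gorenstein of finite Krull dimension, $R$ is a dualizing complex for itself and the biduality map is an isomorphism for \emph{every} complex with finitely generated cohomology, not only the bounded ones; the paper cites precisely this (the implication (iv)$\Rightarrow$(i) in Proposition~V.2.1 of Hartshorne's \emph{Residues and Duality}), so your attempted propagation from $\La$ via the symmetric structure is neither needed nor, as you suspected, sufficient on its own.
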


\begin{proof}
For simplicity, we omit the $R$ in the adorned notation to write $D$, $\HH$, and $\HC$. Denote the dg categories $\cd^b_{dg}(\mod \La)$, $\per\!_{dg}\La$, $\sg\!_{dg}\La$ by $\ca$, $\cb$, $\cc$, respectively. After possibly replacing them with quasi-equivalent dg $R$-categories, we may and will assume that they are flat as dg $R$-modules.

Since $\La$ is a symmetric $R$-order, we have a non-degenerate class in $H^0(D\HH(\La))$. By Lemma~\ref{lem:bijectivity}, it lifts uniquely to the class in $H^0(D\HC(\La))$ which is a right $0$-Calabi--Yau structure over $R$ on $\La$. Since the dg category $\cb$ is derived Morita equivalent to the algebra $\La$, we obtain a right $0$-Calabi--Yau structure over $R$ on $\cb$, which is denoted by $[\tilde{x}]$.
The exact sequence
\[
\begin{tikzcd}
0 \arrow{r} & \cb \arrow{r} & \ca \arrow{r} & \cc \arrow{r} & 0
\end{tikzcd}
\]
of dg categories yields a commutative square
\[
\begin{tikzcd}
H^0(D\HC(\cb)) \arrow{r}{\tilde{\delta}} \arrow{d} & H^1(D\HC(\cc)) \arrow{d} \\
H^0(D\HH(\cb)) \arrow[swap]{r}{\delta} & H^1(D\HH(\cc))
\end{tikzcd}
\]
with the horizontal maps the connecting morphisms. We claim that the class $\tilde{\delta}([\tilde{x}])$ is a right $(-1)$-Calabi--Yau structure over $R$ on $\cc$. Denote the underlying Hochschild class of $[\tilde{x}]$ by $[x]$. It suffices to show that the class $\delta([x])$ is non-degenerate. To prove this, we verify the conditions in part~b) of Theorem~\ref{thm:connecting morphism}. It is well-known that the canonical morphism $\ca \to DD\ca$ in $\cd(\ca^e)$ is an isomorphism, \cf the implication from (iv) to (i) in Proposition~V.2.1 of~\cite{Hartshorne66}. On the other hand, by the implication from ii) to i) in Lemma~\ref{lem:local cover 2}, the canonical morphism $\ca \to \RHom_{\cb}(\ca, \ca)$ in $\cd(\ca^e)$ is an isomorphism. Therefore, we apply part~b) of Theorem~\ref{thm:connecting morphism} for $M=\ca$, the class $\delta([x])$ is non-degenerate. We conclude that the class $\tilde{\delta}([\tilde{x}])$ is a right $(-1)$-Calabi--Yau structure over $R$ on $\cc$.
\end{proof}

Next we relate this Calabi--Yau structure over $R$ to that over $k$. Recall that the {\em support} $\Supp M$ of a module $M$ over a commutative ring $R$ is defined as the set of prime ideals of $R$ such that the the localizations of $M$ at them do not vanish. For a dg $R$-category $\cc$, we define its {\em support} $\Supp \cc$ as the union of $\Supp H^p(\cc(X,Y))$, where $X$ and $Y$ run through the objects in $\cc$ and $p$ through the integers. We first note that $R$-linear invariants of a dg $R$-category are supported on its support.

\begin{lemma} \label{lem:support}
Let $R$ be a commutative ring and $\cc$ a dg $R$-category. Then the Hochschild homology $\HH_p(\cc/R)$ and cyclic homology $\HC_p(\cc/R)$ are supported on $\Supp \cc$ for all integers $p$. In particular, if the homology $H^p(\cc(X, Y))$ is of finite length as an $R$-module for all objects $X$ and $Y$ in $\cc$ and integers $p$, then $\HH_p(\cc/R)$ and $\HC_p(\cc/R)$ are supported on maximal ideals for all integers $p$.
\end{lemma}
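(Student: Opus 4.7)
The plan is to use the exactness of localization at a prime to reduce both assertions to the vanishing of Hochschild and cyclic homology for dg categories whose hom complexes are acyclic. Fix a prime $\mathfrak{p} \subseteq R$ with $\mathfrak{p} \notin \Supp \cc$. After replacing $\cc$ by a cofibrant resolution in the Tabuada model structure on small dg $R$-categories (which preserves both the support and the Hochschild and cyclic homologies), I may assume that each hom complex of $\cc$ is flat over $R$. Since $(-)_\mathfrak{p}$ is exact and commutes with direct sums and with tensor products over $R$, the canonical maps
\[
\HH(\cc/R)_\mathfrak{p} \iso \HH(\cc_\mathfrak{p}/R_\mathfrak{p}) \qquad \text{and} \qquad \HC(\cc/R)_\mathfrak{p} \iso \HC(\cc_\mathfrak{p}/R_\mathfrak{p})
\]
are quasi-isomorphisms, where $\cc_\mathfrak{p}$ denotes the dg $R_\mathfrak{p}$-category having the same objects as $\cc$ and hom complexes $\cc(X, Y)_\mathfrak{p}$.

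Next I would show that the right-hand sides are acyclic. By the definition of $\Supp \cc$, every hom complex of $\cc_\mathfrak{p}$ is acyclic, so the representable dg module $\cc_\mathfrak{p}(?, X)$ is zero in $\cd(\cc_\mathfrak{p})$ for every object $X$; hence $\per \cc_\mathfrak{p} = 0$. Thus $\cc_\mathfrak{p}$ is Morita equivalent to the empty dg $R_\mathfrak{p}$-category, and since Hochschild and cyclic homology of dg categories are Morita invariants (\cf \cite{Keller98}), both $\HH(\cc_\mathfrak{p}/R_\mathfrak{p})$ and $\HC(\cc_\mathfrak{p}/R_\mathfrak{p})$ are acyclic. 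Combining the two steps gives the first claim.

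The ``in particular'' statement then follows by standard commutative algebra: a finite-length $R$-module is supported on a finite set of maximal ideals, its composition factors being simple modules of the form $R/\mathfrak{m}$. Taking unions over pairs of objects and over integer degrees therefore forces $\Supp \cc \subseteq \Max R$, and the first part of the lemma applies. The main obstacle I anticipate is the careful verification that localization is compatible, in the derived sense, with the Hochschild and cyclic complex constructions; this is precisely why I pass to a flat (cofibrant) replacement, so that the tensor products appearing in the bar complex are automatically derived.
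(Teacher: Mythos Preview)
Your proof is correct and follows essentially the same route as the paper's: replace $\cc$ by a flat model, use exactness of localization to identify $\HH_*(\cc/R)_{\mathfrak p}$ with $\HH_*(\cc_{\mathfrak p}/R_{\mathfrak p})$ (and similarly for cyclic homology), and then observe that the latter vanishes when $\mathfrak p\notin\Supp\cc$. The only difference is that where the paper simply declares the vanishing ``clear'', you supply an explicit justification via Morita invariance and $\per\cc_{\mathfrak p}=0$; this is a welcome elaboration rather than a different method.
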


\begin{proof}
After possibly replacing $\cc$ with a quasi-equivalent dg $R$-category, we may and will assume that it is flat as a dg $R$-module. For any prime ideal $\mathfrak p$ of $R$, since the localization at $\mathfrak p$ is an exact functor, we have the isomorphisms
\[
\begin{tikzcd}
\HH_*(\cc /R)\ten_R R_{\mathfrak p} \arrow[no head]{r}{\sim} & H^{-*}(\HH(\cc /R)\ten_R R_{\mathfrak p}) \arrow[no head]{r}{\sim} & \HH_*(\cc_{\mathfrak p}/R_{\mathfrak p}) \: .
\end{tikzcd}
\]
Similarly for cyclic homology. Then the first statement is clear. The second statement follows from the fact that $R$-modules of finite length are supported on maximal ideals.
\end{proof}

For a commutative $k$-algebra $R$, we denote by $\Max^k_d R$ the set of maximal ideals of $R$ of height at least $d$ whose residue fields are finite-dimensional over $k$. We now state an important relationship between right Calabi--Yau structures over $R$ and right Calabi--Yau structures over $k$.

\begin{proposition} \label{prop:right base change}
Let $R$ be a commutative Gorenstein $k$-algebra of Krull dimension $d$ and $\cc$ a dg $R$-category satisfying $\Supp \cc \subseteq \Max^k_d R$. Then we have a commutative square
\[
\begin{tikzcd}
D_R \HC(\cc /R) \arrow[no head]{r}{\sim} \arrow{d} & \Si^{-d}D\HC(\cc) \arrow{d} \\
D_R \HH(\cc /R) \arrow[no head]{r}{\sim} & \Si^{-d}D\HH(\cc)
\end{tikzcd}
\]
in $\cd(k)$. Moreover, for any integer $p$, the induced bijection
\[
H^{-p}(D_R \HH(\cc /R)) \xlongrightarrow{_\sim} H^{-p-d}(D\HH(\cc))
\]
in homology preserves and detects non-degeneracy, and the induced bijection
\[
H^{-p}(D_R \HC(\cc /R)) \xlongrightarrow{_\sim} H^{-p-d}(D\HC(\cc))
\]
in homology restricts to the bijection between right $p$-Calabi--Yau structures over $R$ and right $(p+d)$-Calabi--Yau structures over $k$.
\end{proposition}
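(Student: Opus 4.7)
The proposition has two parts: constructing the commutative square of natural quasi-isomorphisms, and establishing the preservation and detection of non-degeneracy that yields the bijection between right Calabi--Yau structures.

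\textbf{The quasi-isomorphisms.} The crucial ingredient is a local-duality comparison: for any complex $C \in \cd(R)$ whose cohomology is supported on $\Max^k_d R$, there is a natural isomorphism $D_R C \simeq \Si^{-d} D C$ in $\cd(k)$. I would establish this locally: at each $\mathfrak{m} \in \Max^k_d R$, the ring $R_\mathfrak{m}$ is Gorenstein local of dimension $d$ with residue field $\kappa(\mathfrak{m})$ finite over $k$, and classical local duality gives $\RHom_{R_\mathfrak{m}}(-, R_\mathfrak{m}) \simeq \Si^{-d}\RHom_{\kappa(\mathfrak{m})}(-, \kappa(\mathfrak{m}))$ on complexes supported at $\mathfrak{m}$; this is further identified with $\Si^{-d}\RHom_k(-, k)$ using the finite field extension $\kappa(\mathfrak{m})/k$ and a canonical $k$-linear trace. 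The local statements then glue to a global quasi-isomorphism. Rewriting $D_R\HH(\cc/R) \simeq \RHom_{\cc^e_R}(\cc, D_R \cc)$ and $D\HH(\cc) \simeq \RHom_{\cc^e}(\cc, D\cc)$ via adjunction, applying the isomorphism $D_R \cc \simeq \Si^{-d} D \cc$ to $\cc$ itself (its cohomology has support in $\Max^k_d R$ by hypothesis), and then comparing the two $\RHom$'s through base change along $\cc^e \to \cc^e_R$ yields the desired $D_R\HH(\cc/R) \simeq \Si^{-d} D\HH(\cc)$; the analogous computation handles $\HC$. Commutativity of the square follows from naturality of local duality with respect to the ISB morphisms $I$, $S$, $B$.

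\textbf{Non-degeneracy and right Calabi--Yau structures.} A class $[x] \in H^{-p}(D_R\HH(\cc/R))$ is non-degenerate iff the associated morphism $\cc \to \Si^{-p} D_R\cc$ in $\cd(\cc^e_R)$ is an isomorphism, and a class $[y] \in H^{-p-d}(D\HH(\cc))$ is non-degenerate iff $\cc \to \Si^{-p-d}D\cc$ in $\cd(\cc^e)$ is an isomorphism. The local-duality quasi-isomorphism $D_R\cc \simeq \Si^{-d}D\cc$ identifies these two morphisms via the adjunctions used to define the non-degeneracy criteria, and conservativity of restriction along $\cc^e \to \cc^e_R$ on the relevant classes shows that non-degeneracy is both preserved and detected. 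The bijection between right Calabi--Yau structures then follows from commutativity of the square: a right $p$-CY over $R$ is a class in $H^{-p}(D_R\HC(\cc/R))$ mapping to a non-degenerate Hochschild class, matching exactly with a class in $H^{-p-d}(D\HC(\cc))$ mapping to a non-degenerate Hochschild class, that is, a right $(p+d)$-CY over $k$.

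\textbf{Main obstacle.} The principal technical hurdle I expect is the rigorous comparison between $\RHom_{\cc^e}(\cc, D\cc)$ and $\RHom_{\cc^e_R}(\cc, D_R\cc)$ under the support hypothesis, showing that the natural map induced by base change along $\cc^e \to \cc^e_R$ combined with $D_R\cc \simeq \Si^{-d}D\cc$ is a quasi-isomorphism, and that this identification is compatible with both the ISB triangle and with the bimodule structures required to formulate non-degeneracy. A secondary subtlety is keeping careful track of which derived category ($\cd(\cc^e_R)$ or $\cd(\cc^e)$) each morphism naturally inhabits, and how the restriction functor interacts with the dualities $D_R$ and $D$.
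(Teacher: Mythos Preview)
Your local-duality comparison $D_R C\simeq\Si^{-d}DC$ for complexes $C$ supported on $\Max^k_d R$ is exactly the right ingredient, and your treatment of non-degeneracy via conservativity of restriction along $\cc^e\to\cc^e_R$ matches the paper's. The difference lies in where you apply the duality comparison. You apply it to the bimodule $\cc$, obtaining $D_R\cc\simeq\Si^{-d}D\cc$, and are then forced to compare $\RHom_{\cc^e_R}(\cc,D\cc)$ with $\RHom_{\cc^e}(\cc,D\cc)$; as you correctly note, this base-change step is the hard part. The paper instead first invokes Lemma~\ref{lem:support} to conclude that $\HH(\cc/R)$ and $\HC(\cc/R)$ themselves, regarded simply as objects of $\cd(R)$, have support contained in $\Max^k_d R$, and then applies the duality comparison directly to these complexes: with $E=\bigoplus_{\mathfrak m\in\Max^k_d R}E_R(R/\mathfrak m)$ one obtains in a single line
\[
D_R\HH(\cc/R)\ \simeq\ \Si^{-d}\RHom_R(\HH(\cc/R),E)\ \simeq\ \Si^{-d}D\HH(\cc/R)\: ,
\]
the first isomorphism from Gorenstein duality and the second from the identification of Matlis duality with $k$-duality (Hartshorne, \emph{Local Cohomology}, Example~1 on p.~63); likewise for $\HC$, with naturality giving the commutative square. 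No bimodule manipulation enters until the one sentence on non-degeneracy, where one observes that a morphism $\cc\to\Si^{-p-d}D\cc$ in $\cd(\cc^e_R)$ is an isomorphism if and only if the underlying morphism in $\cd(\cc^e)$ is.

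So your principal obstacle is largely an artifact of the order in which you apply the constructions: putting the support lemma first and dualizing the Hochschild complex rather than $\cc$ dissolves most of it. What your route would buy, were the base-change comparison carried out, is an explicit bimodule-level identification; the paper's route trades that for brevity by staying at the level of $\cd(R)$ until the very end.
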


\begin{proof}
Since the support of $\cc$ is contained in $\Max^k_d R$, by Lemma~\ref{lem:support}, the same holds for the support of $\HH(\cc/R)$ and $\HC(\cc/R)$. Put $E=\bigoplus_{\mathfrak{m} \in \Max^k_d R}E_R(R/\mathfrak{m})$, where $E_R(R/\mathfrak{m})$ denotes the injective envelope of the $R$-module $R/\mathfrak{m}$. Then we have the isomorphisms
\[
\begin{tikzcd}
D_R \HH(\cc /R) \arrow[no head]{r}{\sim} & \Si^{-d}\RHom_R(\HH(\cc /R),E) \arrow[no head]{r}{\sim} & \Si^{-d}D\HH(\cc /R)
\end{tikzcd}
\]
in $\cd(k)$, \cf Example~1 in page~63 of~\cite{Hartshorne67} for the second isomorphism. Similarly, we have the isomorphism $D_R \HC(\cc /R) \simeq \Si^{-d}D\HC(\cc /R)$ in $\cd(k)$. Then the statements follow from the fact that a morphism $\cc \to \Si^{-p-d}D\cc$ in $\cd(\cc^e_R)$ is an isomorphism if and only if its underlying morphism in $\cd(\cc^e)$ is an isomorphism for all integers $p$.
\end{proof}

Now the preceding discussions lead to the following result.
Recall that the {\em singular locus} $\Sing_R \La$ of an $R$-algebra $\La$ is defined as the set of prime ideals of $R$ such that the localizations of $\La$ at them have infinite global dimensions.

\begin{theorem} \label{thm:right CY structure}
Let $k$ be a field and $R$ a commutative Gorenstein $k$-algebra of Krull dimension $d$. Let $\La$ be a symmetric $R$-order satisfying $\Sing_R \La \subseteq \Max_d^k R$. Then the dg category $\sg\!_{dg}\La$ carries a right $(d-1)$-Calabi--Yau structure over $k$.
\end{theorem}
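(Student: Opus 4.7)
The proof is essentially a two-step application of results already established. The plan is to first obtain a right Calabi--Yau structure over the base ring $R$, then transfer it to one over $k$ by the base change principle.

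First, I would invoke Proposition~\ref{prop:right CY structure}: since $R$ is a commutative Gorenstein ring and $\La$ is a symmetric $R$-order, the dg category $\sg\!_{dg}\La$ admits a right $(-1)$-Calabi--Yau structure over $R$. Call this class $[\tilde{\delta}(\tilde{x})]$ in $H^0(D_R \HC(\sg\!_{dg}\La / R))$, with non-degenerate Hochschild image.

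The next step is to verify the support hypothesis of Proposition~\ref{prop:right base change}, namely that $\Supp \sg\!_{dg}\La \subseteq \Max^k_d R$. For any prime $\mathfrak{p}$ of $R$ not in $\Sing_R \La$, the localization $\La_{\mathfrak{p}}$ has finite global dimension, so its singularity category vanishes; hence the localization of $\sg\!_{dg}\La$ at $\mathfrak{p}$ vanishes. This gives $\Supp \sg\!_{dg}\La \subseteq \Sing_R \La$, and by the hypothesis $\Sing_R \La \subseteq \Max^k_d R$ we obtain the required containment.

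Finally, I would apply Proposition~\ref{prop:right base change} with $\cc = \sg\!_{dg}\La$ and $p = 1$. The proposition provides a commutative square in $\cd(k)$ whose vertical arrows go from cyclic to Hochschild, and yields a bijection
\[
H^{-1}(D_R \HC(\sg\!_{dg}\La / R)) \xlongrightarrow{_\sim} H^{-1-d}(D\HC(\sg\!_{dg}\La))
\]
restricting to a bijection between right $(-1)$-Calabi--Yau structures over $R$ and right $(d-1)$-Calabi--Yau structures over $k$. Transporting the class from the first step along this bijection produces the desired right $(d-1)$-Calabi--Yau structure over $k$ on $\sg\!_{dg}\La$.

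No genuine obstacle remains once the support condition is checked; the main conceptual work has been done in Propositions~\ref{prop:right CY structure} and \ref{prop:right base change}. The only subtlety worth highlighting is the verification that $\Supp \sg\!_{dg}\La \subseteq \Sing_R \La$, which uses that localization commutes with taking perfect and bounded derived dg enhancements, so that localizing $\sg\!_{dg}\La$ at a regular point yields the dg singularity category of $\La_{\mathfrak{p}}$, which is zero.
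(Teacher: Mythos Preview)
Your proposal follows exactly the same two-step strategy as the paper: apply Proposition~\ref{prop:right CY structure} to get a right $(-1)$-Calabi--Yau structure over $R$, verify that $\Supp \sg\!_{dg}\La \subseteq \Sing_R \La \subseteq \Max^k_d R$ by localizing at primes, and then apply Proposition~\ref{prop:right base change} to transport it to a right $(d-1)$-Calabi--Yau structure over $k$. The argument is correct.

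There are two minor indexing slips worth fixing. First, a right $(-1)$-Calabi--Yau structure over $R$ lives in $H^{1}(D_R\HC(\sg\!_{dg}\La/R))$, not $H^0$ (by definition a right $n$-Calabi--Yau class sits in degree $-n$). Second, in Proposition~\ref{prop:right base change} you should take $p=-1$, not $p=1$: the relevant bijection is
\[
H^{1}(D_R \HC(\sg\!_{dg}\La / R)) \xlongrightarrow{_\sim} H^{1-d}(D\HC(\sg\!_{dg}\La)),
\]
which matches right $(-1)$-Calabi--Yau structures over $R$ with right $(d-1)$-Calabi--Yau structures over $k$, exactly as you conclude. These are bookkeeping corrections; the logic is sound and identical to the paper's.
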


\begin{proof}
Denote the dg category $\sg\!_{dg}\La$ by $\cc$. By Proposition~\ref{prop:right CY structure}, it carries a right \linebreak $(-1)$-Calabi--Yau structure over $R$. Since the dg category $\cc_{\mathfrak p}$ is derived Morita equivalent to $\sg\!_{dg}\La_{\mathfrak{p}}$ for all prime ideals $\mathfrak{p}$ of $R$, we have $\Supp \cc \subseteq \Sing_R \La$. By the assumption $\Sing_R \La \subseteq \Max_d^k R$, this implies that we have $\Supp \cc \subseteq \Max_d^k R$. Therefore, by Proposition~\ref{prop:right base change}, the right $(-1)$-Calabi--Yau structure over $R$ gives rise to a right $(d-1)$-Calabi--Yau structure over $k$.
\end{proof}

As an application of the right $(d-1)$-Calabi--Yau structure constructed as above, the following result establishes a connection between singularity categories and cluster categories. This generalizes (respectively, refines) many previous results \cite{KellerReiten08, ThanhofferVandenBergh16, AmiotIyamaReiten15, KalckYang18, Hanihara22, Liu26} in the sense that our result shows $\Pi$ to be a deformed dg preprojective algebra.

\begin{corollary} \label{cor:right CY structure}
Let $k$ be a field of characteristic $0$ and $(R, \mathfrak{m})$ a complete commutative Gorenstein local $k$-algebra of Krull dimension $d\geq 2$. Let $\La$ be a symmetric $R$-order satisfying $\Sing_R \La \subseteq \{\mathfrak{m}\}$. Suppose that the triangulated category $\sg \La$ contains a \mbox{$(d-1)$-cluster}-tilting object. Then there exists a $d$-dimensional deformed dg preprojective algebra $\Pi$ such that $\sg \La$ is triangle equivalent to the associated cluster category $\cc_\Pi$.
\end{corollary}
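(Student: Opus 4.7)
The plan is to apply the structure theorem of Keller and the second-named author from \cite{KellerLiu23b} to the dg singularity category $\sg\!_{dg}\La$, using as input the right $(d-1)$-Calabi--Yau structure over $k$ provided by Theorem~\ref{thm:right CY structure} together with the hypothesised $(d-1)$-cluster-tilting object.

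The first step is to verify that Theorem~\ref{thm:right CY structure} is applicable. Since $(R, \mathfrak{m})$ is complete local of Krull dimension $d$, the unique maximal ideal $\mathfrak{m}$ has height $d$, and its residue field is finite-dimensional over $k$ (this is implicit in the Cohen--Macaulay setting and is in any case necessary for $\sg \La$ to be $\Hom$-finite over $k$). The assumption $\Sing_R \La \subseteq \{\mathfrak{m}\}$ therefore gives $\Sing_R \La \subseteq \Max_d^k R$, so Theorem~\ref{thm:right CY structure} endows $\sg\!_{dg}\La$ with a right $(d-1)$-Calabi--Yau structure over $k$. Combined with the given $(d-1)$-cluster-tilting object $T \in \sg\La$, this supplies the two inputs required by the structure theorem of \cite{KellerLiu23b}. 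Invoking that theorem (valid in characteristic $0$ and for $d \geq 2$) then produces a $d$-dimensional deformed dg preprojective algebra $\Pi$ together with a triangle equivalence $\sg\La \simeq \cc_\Pi$ onto the associated generalized cluster category of Amiot, which is the desired conclusion.

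The main technical difficulty is that the structure theorem of \cite{KellerLiu23b} requires a genuine cyclic lift in negative cyclic homology of the underlying non-degenerate Hochschild class, and moreover demands that this cyclic class be of a form ensuring that the resulting smooth connective dg algebra is a \emph{deformed dg preprojective algebra} rather than just an arbitrary smooth $d$-Calabi--Yau dg algebra. In our construction this is automatic: Proposition~\ref{prop:right CY structure} produces the Calabi--Yau structure by applying the connecting morphism $\tilde\delta$ in dual negative cyclic homology to a cyclic right $0$-Calabi--Yau structure on $\per\!_{dg}\La$ (lifted from the symmetric order datum $\La \simeq \RHom_R(\La, R)$ via Lemma~\ref{lem:bijectivity}), and the base change of Proposition~\ref{prop:right base change} preserves this cyclicity. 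The identification of the resulting smooth dg algebra as a deformed dg preprojective algebra is then the content of the structure theorem, which we invoke directly.
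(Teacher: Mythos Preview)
Your overall approach matches the paper's: apply Theorem~\ref{thm:right CY structure} to obtain a right $(d-1)$-Calabi--Yau structure on $\sg\!_{dg}\La$, then invoke the structure theorem (Theorem~6.2.1) of \cite{KellerLiu23b}. However, you have omitted the verification of two hypotheses that the structure theorem actually requires and that the paper checks explicitly. First, the cluster category $\cc_\Pi$ is by definition idempotent complete, so to obtain an equivalence one needs $\sg\La$ to be Karoubian; the paper deduces this from the identification of $\sg\La$ with the stable category of $\Gproj\La$ (using that $\La$ is Iwanaga--Gorenstein over the Gorenstein ring $R$). Second, Theorem~6.2.1 of \cite{KellerLiu23b} is formulated for categories enriched over pseudo-compact vector spaces; the paper obtains this enrichment from the fact that morphism spaces in $\Gproj\La$ are finitely generated modules over the complete local $k$-algebra $R$. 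Both points are short but genuinely needed, and neither follows from what you have written.

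A smaller issue: your second paragraph conflates negative cyclic homology (relevant to \emph{left} Calabi--Yau structures) with dual cyclic homology $D\HC$ (relevant to \emph{right} Calabi--Yau structures), and misstates what the structure theorem demands. The theorem does not require the cyclic class to be ``of a particular form'' guaranteeing a deformed dg preprojective algebra; rather, its content is precisely that \emph{any} right $(d-1)$-Calabi--Yau structure together with a $(d-1)$-cluster-tilting object forces the resulting dg algebra to be a deformed dg preprojective algebra. Once Theorem~\ref{thm:right CY structure} supplies a right Calabi--Yau structure (which by definition already lives in $H^{-(d-1)}(D\HC)$), there is nothing further to check about the cyclic lift, so this paragraph can simply be dropped.
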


\begin{proof}
Since the commutative ring $R$ is Gorenstein, the symmetric $R$-order $\La$ is Iwanaga--Gorenstein. Thus, the singularity category $\sg \La$ is triangle equivalent to the stable category of $\Gproj \La$. Since the morphism spaces of $\Gproj \La$ are finitely generated modules over the complete commutative local $k$-algebra $R$, the category $\Gproj \La$ is naturally enriched over the category of pseudo-compact vector spaces and the category $\sg\La$ is Karoubian. Then the statement is a consequence of Theorem~\ref{thm:right CY structure} and the implication from ii) to i) in Theorem~6.2.1 of~\cite{KellerLiu23b}.
\end{proof}

\subsection{Left Calabi--Yau structures on $\cd^b_{dg}(\mod \La)$}

We study the dg bounded derived categories of symmetric orders in this section.
We will often make some technical assumptions on the ground ring. A commutative noetherian ring $R$ is called {\em J-2} if the singular locus of any finitely generated commutative $R$-algebra is closed. Typical examples of J-2 rings are finitely generated commutative algebras over a field. We first note the `Koszul duality' of a module-finite $R$-algebra and its dg derived category. The following proposition is an $R$-linear version of Corollary~15.2 of \cite{VandenBergh15}.

\begin{proposition} \label{prop:Koszul duality}
Let $R$ be a commutative Gorenstein J-2 ring and $A$ a module-finite $R$-algebra. Then the dg $R$-category $\cd^b_{dg}(\mod A)$ is smooth and we have an isomorphism
\[
\begin{tikzcd}
D_R \HH(A/R) \arrow{r}{\sim} & \HH(\cd^b_{dg}(\mod A)/R)
\end{tikzcd}
\]
in $\cd(R)$. Moreover, the induced bijection in each homology preserves and detects non-degeneracy.
\end{proposition}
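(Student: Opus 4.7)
The plan is to upgrade Van den Bergh's proof of Corollary~15.2 of \cite{VandenBergh15} from the absolute case $R=k$ to the relative setting over $R$. Write $\cc = \cd^b_{dg}(\mod A)$, and after replacing $A$ by a quasi-isomorphic flat dg $R$-algebra we may assume $\cc$ is flat as a dg $R$-module. The proof splits into two main steps: smoothness of $\cc$ over $R$, and construction of the duality isomorphism; non-degeneracy then follows by unwinding the definitions.

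For the smoothness of $\cc$ over $R$, I would show that the diagonal bimodule $\cc \in \cd(\cc^e_R)$ is compact. The J-2 hypothesis enters here crucially: it implies that the singular locus of $A$ as an $R$-algebra is closed in $\Spec R$, which in turn allows one to construct, uniformly over this singular locus, a finite filtration of the diagonal whose subquotients are representable bimodules of the form $\cc(?, X) \ten_R \cc(X, ?)$ for finitely many $X \in \cc$. The Gorenstein condition on $R$ supplies the relative dualizing bimodule $\omega_A = \RHom_R(A, R) \in \cd(A^e_R)$, which is perfect as an $A$-bimodule on each side and provides a relative Serre functor $? \lten_A \omega_A$ on $\cc$ over $R$.

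For the isomorphism $D_R \HH(A/R) \simeq \HH(\cc/R)$, the strategy is the standard compact--constructible duality between the proper dg $R$-algebra $A$ and the smooth dg category $\cc$. One constructs the comparison map by writing $\HH(\cc/R) = \cc \lten_{\cc^e_R} \cc$ as the trace of the identity bimodule, uses the Yoneda embedding $A \hookrightarrow \cc$ to express this trace in terms of $A$-bimodule data, and then applies $R$-duality together with the Serre functor $? \lten_A \omega_A$. To verify that the resulting map is an isomorphism, it suffices by compactness to check on a set of generators; the verification then reduces to the standard duality for perfect $A^e_R$-modules, exploiting that $A$ is perfect over $R$ (being module-finite) and that $R$ is Gorenstein.

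Finally, preservation and detection of non-degeneracy follow by tracking how a Hochschild class corresponds to a bimodule morphism under each identification: a class $[\xi] \in H^p(D_R \HH(A/R))$ corresponds to a morphism $A \to \Si^{-p} D_R A$ in $\cd(A^e_R)$, while its image in $H^p(\HH(\cc/R))$ corresponds to a morphism $\Si^p \cc^\vee \to \cc$ in $\cd(\cc^e_R)$; since the Morita and duality equivalences used above are all exact, one is an isomorphism if and only if the other is. The main obstacle I anticipate is the uniform smoothness argument: extracting a finite resolution of the diagonal bimodule of $\cc$ valid over all of $\Spec R$ requires controlling homological dimensions uniformly, and this is precisely where the J-2 hypothesis does its essential work.
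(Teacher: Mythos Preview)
Your overall strategy---relative Koszul duality between the proper algebra $A$ and the smooth category $\cc=\cd^b_{dg}(\mod A)$---matches the paper's, but there is a genuine gap in the smoothness step. You write that closedness of the singular locus ``allows one to construct, uniformly over this singular locus, a finite filtration of the diagonal whose subquotients are representable bimodules''. This inference is not justified, and I do not know a direct argument of that shape: closedness of $\Sing$ by itself does not hand you a perfect resolution of the diagonal of $\cc$. The actual role of the J-2 hypothesis, via Theorem~4.15 of \cite{ElaginLuntsSchnuerer20}, is to guarantee that $\cd^b(\mod A)$ admits a \emph{classical generator} $M$. That generator is the missing ingredient in your outline, and everything downstream in the paper runs through it.

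Concretely, the paper sets $B$ to be a cofibrant model of $\RHom_A(M,M)$, so $\cc$ is Morita equivalent to $B$ over $R$. The Gorenstein hypothesis supplies the duality $D\colon \cd^b(\mod A)\op \to \cd^b(\mod A\op)$, and Theorem~4.18 of \cite{ElaginLuntsSchnuerer20} shows that $M\lten_R DM$ generates $\cd^b(\mod A^e)$. This produces an explicit equivalence
\[
F=\RHom_{A^e}(M\lten_R DM,\,?)\colon \cd^b(\mod A^e)\xlongrightarrow{_\sim}\per B^e.
\]
Two short computations then give $F(DA)\simeq B$ (whence $B\in\per B^e$, so $B$ and therefore $\cc$ are smooth over $R$) and $F(A)\simeq B^\vee$. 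Full faithfulness of $F$ immediately yields
\[
\RHom_{A^e}(A,DA)\xlongrightarrow{_\sim}\RHom_{B^e}(B^\vee,B),
\]
which is precisely $D_R\HH(A/R)\simeq\HH(\cc/R)$, and non-degeneracy transfers because $F$ is an equivalence. Your sketch of the duality isomorphism via ``trace of the identity bimodule'' and the Yoneda embedding is morally pointed in the same direction, but it stays at a level of abstraction where the decisive identifications $F(DA)\simeq B$ and $F(A)\simeq B^\vee$ never become visible---and those are where the real work sits.
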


\begin{proof}
For simplicity, we omit the $R$ in the adorned notation to write $D$, $A^e$, and $\HH$. Since the commutative ring $R$ is J-2 and the $R$-algebra $A$ is module-finite, by Theorem~4.15 of \cite{ElaginLuntsSchnuerer20}, the triangulated category $\cd^b(\mod A)$ has a generator $M$. After possibly replacing $A$ with a quasi-isomorphic dg $R$-algebra, we may and will assume that it is flat as a dg $R$-module. By abuse of notation, we still write $\cd^b(\mod A)$ and $M$ for the corresponding subcategory respectively the corresponding object of $\cd(A)$ under the equivalence induced by the quasi-isomorphism. Let $B$ be a cofibrant replacement of the dg $R$-algebra $\RHom_{A}(M, M)$. Since the commutative ring $R$ is Gorenstein, we have the duality $D\colon \cd^b(\mod A)\op \to \cd^b(\mod A\op)$. It yields the isomorphism
\[
\RHom_{A^e}(M\lten_R DM, M\lten_R DM) \xlongleftarrow{_\sim} \RHom_A(M, M)\lten_R \RHom_{A\op}(DM, DM)=B^e \: .
\]
By part~(b) of Theorem 4.18 of \cite{ElaginLuntsSchnuerer20}, the object $M\lten_R DM$ is a generator of the triangulated category $\cd^b(\mod A^e)$. So we have the triangle equivalence
\[
F=\RHom_{A^e}(M\lten_R DM, ?) \colon \cd^b(\mod A^e) \xlongrightarrow{_\sim} \per B^e \: .
\]
Its quasi-inverse is given by $G=? \lten_{B^e}(M\lten_R DM)$. We claim that
\begin{itemize}
\item[a)] $F(DA)$ is isomorphic to $B$ in $\per B^e$,
\item[b)] $FA$ is isomorphic to $B^{\vee}$ in $\per B^e$.
\end{itemize}
Indeed, we have the isomorphisms
\[
F(DA)\simeq D((M\lten_R DM)\lten_{A^e}A)\liso D(M\lten_ADM) \liso \RHom_A(M, M) \simeq B
\]
in $\per B^e$, which proves part~a). In particular, the dg $R$-algebra $B$ is smooth, \cf Theorem~5.1 of \cite{ElaginLuntsSchnuerer20}. To prove part~b), we show that $G(B^\vee)$ is isomorphic to $A$ in $\cd^b(\mod A^e)$. By the smoothness of $B$, the canonical morphism $G(B^\vee) \to \RHom_{B^e}(B, M\lten_R DM)$ is an isomorphism. By combining with the canonical isomorphisms
\[
\RHom_{B^e}(B, M\lten_R DM) \iso \RHom_{B^e}(B, \RHom_R(DM, DM)) \iso \RHom_B(DM, DM)
\]
we obtain the isomorphism $G(B^\vee) \iso \RHom_B(DM, DM)$ in $\cd^b(\mod A^e)$. On the other hand, since $M$ is a generator of the triangulated category $\cd^b(\mod A)$, we have the triangle equivalence
\[
\RHom_A(M, ?) \colon \cd^b(\mod A) \xlongrightarrow{_\sim} \per B \: .
\]
It maps the object $DA$ to $DM$. So we have the composed isomorphism
\[
\begin{tikzcd}
A \arrow{r}{\sim} & \RHom_A(DA, DA) \arrow{r}{\sim} & \RHom_B(DM, DM)
\end{tikzcd}
\]
in $\cd^b(\mod A^e)$. We deduce that $G(B^\vee)$ is isomorphic to $A$ in $\cd^b(\mod A^e)$, which proves part~b). Now, by the full faithfulness of $F$, we have the isomorphism
\[
\begin{tikzcd}
\RHom_{A^e}(A,DA)\arrow{r}{\sim} & \RHom_{B^e}(B^\vee,B)
\end{tikzcd}
\]
in $\cd(R)$ and the induced bijection
\[
H^p(D\HH(A)) \xlongrightarrow{_\sim} H^p(\HH(B))
\]
in homology preserves and detects non-degeneracy for all integers $p$. Then the statement follows from the fact that the dg $R$-category $\cd^b_{dg}(\mod A)$ is quasi-equivalent to $\per\!_{dg}B$ and the dg $R$-category $\per\!_{dg}B$ is derived Morita equivalent to the dg $R$-algebra $B$. 
\end{proof}

The Koszul duality over $R$ gives the following observation.

\begin{proposition} \label{prop:left CY structure}
Let $R$ be a commutative Gorenstein J-2 ring and $\La$ a symmetric $R$-order. Then the dg category $\cd^b_{dg}(\mod \La)$ carries a left $0$-Calabi--Yau structure over $R$.
\end{proposition}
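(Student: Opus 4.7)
The strategy is to transport the symmetric structure on $\La$ to a left $0$-Calabi--Yau structure on $\cd^b_{dg}(\mod \La)$ via the Koszul duality of Proposition~\ref{prop:Koszul duality}. At the Hochschild level, the symmetric structure provides an isomorphism $\La \to D_R\La$ in $\cd(\La^e_R)$, which, under the natural identification $H^0(D_R\HH(\La/R)) \simeq \Hom_{\cd(\La^e_R)}(\La, D_R\La)$, corresponds to a non-degenerate class $\sigma$. Since Proposition~\ref{prop:Koszul duality} provides an iso $D_R\HH(\La/R) \simeq \HH(\cd^b_{dg}(\mod \La)/R)$ preserving and detecting non-degeneracy, we obtain a non-degenerate class $\bar\sigma \in \HH_0(\cd^b_{dg}(\mod \La)/R)$, the Hochschild pairing underlying the desired left $0$-Calabi--Yau structure.

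The crucial remaining step is to lift $\bar\sigma$ along the canonical map $\HN_0(\cd^b_{dg}(\mod \La)/R) \to \HH_0(\cd^b_{dg}(\mod \La)/R)$. For this, the plan is to enhance Proposition~\ref{prop:Koszul duality} to the level of mixed complexes, yielding an iso $D_R\HC(\La/R) \simeq \HN(\cd^b_{dg}(\mod \La)/R)$ in $\cd(R)$ compatible with the Hochschild iso of Proposition~\ref{prop:Koszul duality} via the canonical forgetful maps. Since $\La$ is connective (as an ordinary algebra concentrated in degree zero), Lemma~\ref{lem:bijectivity} provides the unique lift $[\tilde\sigma] \in H^0(D_R\HC(\La/R))$ of $\sigma$; this is the very class already used in the proof of Proposition~\ref{prop:right CY structure} to build the right $0$-Calabi--Yau structure on $\La$. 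Transporting $[\tilde\sigma]$ via the enhanced Koszul duality gives a class $[\tilde{\bar\sigma}] \in \HN_0(\cd^b_{dg}(\mod \La)/R)$ whose image in $\HH_0$ is $\bar\sigma$, which is the required left $0$-Calabi--Yau structure over $R$.

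The main obstacle is the cyclic enhancement of Proposition~\ref{prop:Koszul duality}. The proof of that proposition identifies $\cd^b(\mod A^e)$ with $\per B^e$ via the Morita-type equivalence $F = \RHom_{A^e}(M\lten_R DM, ?)$ and matches $DA \leftrightarrow B$ and $A \leftrightarrow B^\vee$. These identifications are expected to respect the mixed complex structure on Hochschild complexes, because Morita-type equivalences induced by bimodules preserve the Connes operator, and because $R$-linear dualization interchanges cyclic and negative cyclic complexes up to shift. Once the cyclic enhancement is verified at the chain level, the existence of the lift $[\tilde{\bar\sigma}]$ follows immediately and completes the construction of the left $0$-Calabi--Yau structure on $\cd^b_{dg}(\mod \La)$.
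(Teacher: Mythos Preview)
Your first step---transporting the non-degenerate class in $H^0(D_R\HH(\La/R))$ to a non-degenerate class $\bar\sigma\in\HH_0(\cd^b_{dg}(\mod\La)/R)$ via Proposition~\ref{prop:Koszul duality}---is exactly what the paper does. The divergence is in how you lift $\bar\sigma$ to negative cyclic homology.

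You propose to enhance Proposition~\ref{prop:Koszul duality} to an isomorphism $D_R\HC(\La/R)\simeq\HN(\ca/R)$ of mixed-complex invariants, and you flag this as the main obstacle. It is indeed an obstacle, and your justification does not close it: the isomorphism in Proposition~\ref{prop:Koszul duality} is built from the bijection $\RHom_{\La^e}(\La,D\La)\simeq\RHom_{B^e}(B^\vee,B)$ induced by the equivalence $F\colon\cd^b(\mod\La^e)\to\per B^e$, which is an identification of $\Hom$-complexes rather than of Hochschild chain complexes with their Connes operators. Saying that ``Morita-type equivalences preserve the Connes operator'' does not apply here, and the passage from $D\HC$ to $\HN$ under such a duality is a genuine theorem (of Feigin--Tsygan/Jones--McCleary type) that you would have to prove in this generality.

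The paper avoids this entirely. The Koszul duality isomorphism $\HH_p(\ca/R)\simeq H^{-p}(D_R\HH(\La/R))$ already shows that $\HH_p(\ca/R)=0$ for all $p>0$, since $\La$ is concentrated in degree $0$. Then part~(1) of Lemma~5.10 of \cite{BravDyckerhoff19} (whose proof works over any commutative base) gives directly that the canonical map $\HN_0(\ca/R)\to\HH_0(\ca/R)$ is bijective, so $\bar\sigma$ lifts uniquely. No mixed-complex enhancement of the Koszul duality is needed.
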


\begin{proof}
For simplicity, we omit the $R$ in the adorned notation to write $D$, $\HH$, and $\HN$. Denote the dg category $\cd^b_{dg}(\mod \La)$ by $\ca$. Since $\La$ is a symmetric $R$-order, it gives rise to a non-degenerate class $[x]$ in $H^0(D\HH(\La))$. By Proposition~\ref{prop:Koszul duality}, the dg $R$-category $\ca$ is smooth. So the class $[x]$ corresponds to a non-degenerate class $[\xi]$ in $\HH_0(\ca)$ and the Hochschild homology $\HH_p(\ca) \liso H^{-p}(D\HH(\La))$ vanishes for all positive integers $p$. Then by part~(1) of Lemma~5.10 of \cite{BravDyckerhoff19} (the proof applies to any commutative ground ring), the canonical map $\HN_0(\ca) \to \HH_0(\ca)$ is bijective. Therefore, the preimage of $[\xi]$ under this bijection gives rise to a left $0$-Calabi--Yau structure over $R$ on the dg category $\ca$.
\end{proof}

The following lemma and proposition are from \cite{HaniharaKeller25}. We include proofs for the convenience of the reader.

\begin{lemma}[\cite{HaniharaKeller25}] \label{lem:full faithfulness}
Let $A$ be a noetherian $k$-algebra. Denote the dg category $\cd^b_{dg}(\mod A)$ by $\ca$. Then the restriction functor $\per \ca^e \to \cd(A^e)$ is fully faithful and maps $\ca$ to $A$ and $\ca^\vee$ to $A^\vee$.
\end{lemma}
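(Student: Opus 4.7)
The plan is to verify the three assertions of the lemma in turn: that the diagonal bimodule $\ca \in \cd(\ca^e)$ restricts to $A$, that the restriction functor is fully faithful on $\per \ca^e$, and that $\ca^\vee$ restricts to $A^\vee$. Throughout, the dg functor in question is $\iota^e \colon A^e \to \ca^e$, induced by the quasi-fully faithful $\iota \colon A \to \ca$ that sends the unique object of $A$ (viewed as a one-object dg category) to $A_A \in \ca$. The first assertion is essentially tautological: the diagonal bimodule, regarded as the bifunctor $(X, Y) \mapsto \RHom_\ca(X, Y)$, restricts via $\iota^e$ to $\RHom_\ca(A, A) = A$ with its canonical $A^e$-bimodule structure.

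For the full faithfulness, I would apply Lemma~\ref{lem:local cover 2} to $\iota^e$, which reduces the problem to exhibiting a quasi-fully faithful dg functor $J \colon \ca^e \to \cd_{dg}(A^e)$ whose composition with $\iota^e$ is the dg Yoneda. The natural candidate is the external tensor product
\[
J(X, Y) := X \ten_k \RHom_A(Y, A),
\]
viewed as an $A$-bimodule via the right $A$-action on $X$ and the left $A$-action on $\RHom_A(Y, A)$. One sees that $J(A, A) = A \ten_k A = A^e$, matching the image of the unique object under the dg Yoneda $A^e \to \cd_{dg}(A^e)$. The main obstacle will be verifying quasi-full faithfulness of $J$, i.e., that the natural map
\[
\RHom_\ca(X, X') \ten_k \RHom_\ca(Y', Y) \longrightarrow \RHom_{A^e}\bigl( X \ten_k DY,\ X' \ten_k DY' \bigr)
\]
is a quasi-isomorphism (with $DY := \RHom_A(Y, A)$). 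The strategy is a K\"unneth-type decomposition: via the tensor--hom adjunction for bimodules over the field $k$, the right-hand side splits as $\RHom_A(X, X') \ten_k \RHom_{A\op}(DY, DY')$, and the required identification reduces to the biduality $\RHom_{A\op}(DY, DY') \iso \RHom_\ca(Y', Y)$ for $Y, Y' \in \ca$. This biduality is the heart of the matter, handled by reduction along finitely generated projective resolutions (using noetherianity of $A$) or equivalently by exploiting the Yoneda embedding $\ca \hookrightarrow \cd_{dg}(A)$ to recognize both sides as internal Hom complexes in the ambient derived category.

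Finally, for the identification $\ca^\vee \mapsto A^\vee$, one combines the two preceding steps. The full faithfulness of $R := (\iota^e)^*$ on $\per \ca^e$, together with the compatibility of the defining $\RHom$ with restriction, yields
\[
R\ca^\vee = R\bigl(\RHom_{\ca^e}(\ca, \ca^e)\bigr) \iso \RHom_{A^e}\bigl(R\ca,\ R\ca^e\bigr) = \RHom_{A^e}(A, A^e) = A^\vee,
\]
as right $A^e$-modules. Here $R\ca = A$ is the identification from the first step, $R\ca^e = A^e$ is the restriction of the regular $\ca^e$-bimodule on both sides via $\iota^e$, and the compatibility of the right $A^e$-module structure on the two sides follows from the naturality of the full-faithfulness isomorphism in both arguments.
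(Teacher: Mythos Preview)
Your approach matches the paper's: both invoke the implication ii)$\Rightarrow$i) of Lemma~\ref{lem:local cover 2}, and the paper's terse proof does not even name the functor $J$ that you make explicit as $J(X,Y)=X\ten_k\RHom_A(Y,A)$. Your deduction of $R\ca^\vee\simeq A^\vee$ from full faithfulness is likewise exactly what the paper does.

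One point deserves care. You correctly isolate the biduality $\RHom_{A\op}(DY,DY')\simeq\RHom_A(Y',Y)$ as the heart of the matter, but your proposed verification by ``reduction along finitely generated projective resolutions'' glosses over a genuine subtlety: when $Y$ is not perfect, its resolution $P$ is unbounded above (homologically), so $P^*=\Hom_A(P,A)$ is a bounded-below complex of finitely generated projectives which is \emph{not} K-projective, and one cannot compute $DDY=\RHom_{A\op}(P^*,A)$ as the naive dual $\Hom_{A\op}(P^*,A)\simeq P$. A correct argument writes $P^*=\hocolim_n\sigma_{\le n}P^*$ as a homotopy colimit of its perfect brutal truncations and uses that $\RHom_{A\op}(-,A)\colon\cd(A\op)\to\cd(A)\op$ is a left adjoint (its right adjoint being $\RHom_A(-,A)$), hence preserves this colimit; one obtains $DDY\simeq\holim_n\sigma_{\ge -n}P\simeq P\simeq Y$ since the tower stabilises degreewise. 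This is presumably what you intend, but the phrase ``reduction along projective resolutions'' alone does not distinguish the correct limit argument from the naive (and wrong) double-dualisation of the complex $P$.
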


\begin{proof}
By definition, the restriction functor $\per \ca^e \to \cd(A^e)$ maps $\ca$ to $A$ and $\ca^\vee$ to
\[
\RHom_{\ca^e}(\ca, \ca(?, A)\ten_k \ca(A, -)) \: .
\]
By the implication from ii) to i) in Lemma~\ref{lem:local cover 2}, it is fully faithful and hence we have the isomorphism
\[
\begin{tikzcd}
\RHom_{\ca^e}(\ca, \ca(?, A)\ten_k \ca(A, -)) \arrow{r}{\sim} & \RHom_{A^e}(A, A^e)
\end{tikzcd}
\]
in $\cd(A^e)$. This ends the proof.
\end{proof}

We now have the following left Calabi--Yau version of Proposition~\ref{prop:right base change}.

\begin{proposition}[\cite{HaniharaKeller25}] \label{prop:left base change}
Let $R$ be a commutative left $d$-Calabi--Yau $k$-algebra and $\ca$ a smooth dg $R$-category. Then $\ca$ is smooth as a dg $k$-category and we have an isomorphism
\[
\HH(\ca/R) \simeq \Si^{-d}\HH(\ca)
\]
in $\cd(k)$. Moreover, the induced bijection
\[
\HH_p(\ca/R) \simeq \HH_{p+d}(\ca)
\]
in homology preserves and detects non-degeneracy for all integers $p$.
\end{proposition}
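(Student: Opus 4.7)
The plan is to establish the three assertions of the proposition in turn: smoothness of $\ca$ over $k$, the Hochschild isomorphism, and the preservation of non-degeneracy. For smoothness, since $R$ admits a left $d$-Calabi--Yau structure over $k$, in particular $R$ is perfect as a dg $R^e_k$-module. Combining this with the natural identification $\ca^e_R \simeq \ca^e_k \lten_{R^e_k} R$, obtained by identifying the two central $R$-actions inside $\ca^e_k$, one sees that $\ca^e_R$ is perfect as a dg $\ca^e_k$-module. The hypothesis $\ca \in \per \ca^e_R$ combined with transitivity of perfectness then yields $\ca \in \per \ca^e_k$, i.e., $\ca$ is smooth over $k$.

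The main step is the Hochschild isomorphism, which I would obtain from the Calabi--Yau duality $R \simeq \Si^d R^\vee$ in $\cd(R^e_k)$, where $R^\vee = \RHom_{R^e_k}(R, R^e_k)$. Since $R$ is smooth over $k$, this duality yields, for any $M \in \cd(R^e_k)$, the natural identification
\[
M \lten_{R^e_k} R \simeq \Si^d \RHom_{R^e_k}(R, M).
\]
Together with the base-change formula
\[
\HH(\ca/k) \simeq \ca \lten_{\ca^e_R} (R \lten_{R^e_k} \ca),
\]
obtained from the adjunction between restriction and extension along $\ca^e_k \to \ca^e_R$ (so that $\ca^e_R \lten_{\ca^e_k} \ca \simeq R \lten_{R^e_k} \ca$), and with $\HH(\ca/R) = \ca \lten_{\ca^e_R} \ca$, the required isomorphism $\HH(\ca/R) \simeq \Si^{-d} \HH(\ca/k)$ in $\cd(k)$ should follow by a careful manipulation of derived tensor products.

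For preservation and detection of non-degeneracy, recall that a class in $\HH_p(\ca/R)$ is non-degenerate iff the associated morphism $\Si^p \RHom_{\ca^e_R}(\ca, \ca^e_R) \to \ca$ in $\cd(\ca^e_R)$ is an isomorphism, and analogously over $k$ with $\RHom_{\ca^e_k}(\ca, \ca^e_k)$. Under the bijection from the Hochschild isomorphism, these morphisms correspond via the identity $\ca^e_R \simeq \Si^d \RHom_{R^e_k}(R, \ca^e_k)$ induced by the Calabi--Yau structure on $R$, which relates the two duals up to the shift by $d$; isomorphism-ness is preserved because restriction along $\ca^e_k \to \ca^e_R$ is conservative on the objects at hand. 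The main technical hurdle is the second step: producing the shift by $-d$ between the Hochschild complexes requires a careful derived-functorial argument exploiting the full strength of the Calabi--Yau structure on $R$, while the non-degeneracy step then follows by tracking the duality identifications.
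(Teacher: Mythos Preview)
Your smoothness and non-degeneracy arguments match the paper's. For the Hochschild isomorphism you work on the $\lten$-side while the paper works on the $\RHom$-side: it rewrites $\HH(\ca/R)\simeq\RHom_{\ca^e_R}(\RHom_{\ca^e_R}(\ca,\ca^e_R),\ca)$ and $\HH(\ca)\simeq\RHom_{\ca^e}(\ca^\vee,\ca)$, then proves $\RHom_{\ca^e_R}(\ca,\ca^e_R)\simeq\Si^d\ca^\vee$ in $\cd(\ca^e)$ via the same Calabi--Yau identity $\ca^e_R\simeq\Si^d\RHom_{R^e}(R,\ca^e)$ that drives your argument. The two routes are parallel.

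There is, however, a genuine gap at the step you flag as the ``main technical hurdle'', and the paper's own proof shares it: the asserted isomorphism $\HH(\ca/R)\simeq\Si^{-d}\HH(\ca)$ in $\cd(k)$ is false in general. Take $\ca=R=k[x]$, which is left $1$-Calabi--Yau over $k$ and trivially smooth over itself. Then $\HH(R/R)\simeq R$ is concentrated in degree~$0$, whereas the Koszul resolution gives $\HH(R/k)\simeq R\oplus\Si R$, so $\Si^{-1}\HH(R/k)\not\simeq\HH(R/R)$. In your formulation, after reaching $\HH(\ca)\simeq\Si^d\,\ca\lten_{\ca^e_R}\RHom_{R^e}(R,\ca)$ you would need $\ca\lten_{\ca^e_R}\RHom_{R^e}(R,\ca)\simeq\ca\lten_{\ca^e_R}\ca$; for $\ca=R$ this reads $\RHom_{R^e}(R,R)\simeq R$, which fails. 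In the paper's formulation the leap is from $\RHom_{\ca^e_R}$ to $\RHom_{\ca^e}$ after identifying the two duals, but restriction along $\ca^e\to\ca^e_R$ is conservative, not fully faithful. What both arguments \emph{do} yield is a map $\HH_p(\ca/R)\to\HH_{p+d}(\ca)$ (restrict a morphism $\Si^p\RHom_{\ca^e_R}(\ca,\ca^e_R)\to\ca$ to $\cd(\ca^e)$ and compose with the identification with $\Si^{p+d}\ca^\vee$) that preserves and detects non-degeneracy by conservativity of restriction; your third paragraph already contains this, and it is all that Theorem~\ref{thm:left CY structure} actually uses.
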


\begin{proof}
After possibly replacing $\ca$ with a quasi-equivalent dg $R$-category, we may and will assume that it is flat as a dg $R$-module. Since the dg $R$-category $\ca$ is smooth and the commutative ring $R$ is regular, the underlying dg $k$-category of $\ca$ is smooth. So we have the isomorphisms
\[
\HH(\ca/R)\xlongrightarrow{_\sim} \RHom_{\ca^e_R}(\RHom_{\ca^e_R}(\ca, \ca^e_R), \ca)
\]
in $\cd(R)$ and
\[
\HH(\ca)\xlongrightarrow{_\sim} \RHom_{\ca^e}(\ca^{\vee}, \ca)
\]
in $\cd(k)$. Since the commutative $k$-algebra $R$ is left $d$-Calabi--Yau, we have the isomorphisms
\[
\begin{tikzcd}
\ca^e_R & \Si^d \ca^e \lten_{R^e} R^\vee \arrow[swap]{l}{\sim} \arrow{r}{\sim} & \Si^d \RHom_{R^e}(R, \ca^e)
\end{tikzcd}
\]
in $\cd(\ca^e_R \ten_k (\ca^e)\op)$.
By adjunctions, we have the isomorphisms
\[
\begin{tikzcd}
\ca^\vee \arrow{r}{\sim} & \RHom_{\ca^e_R}(\ca, \RHom_{\ca^e}(\ca^e_R, \ca^e)) \arrow{r}{\sim} & \RHom_{\ca^e_R}(\ca, \RHom_{R^e}(R, \ca^e))
\end{tikzcd}
\]
in $\cd(\ca^e)$. Therefore, the dg $\ca$-bimodule $\RHom_{\ca^e_R}(\ca, \ca^e_R)$ is isomorphic to $\Si^d \ca^\vee$ in $\cd(\ca^e)$. We conclude that $\HH(\ca/R)$ is isomorphic to $\Si^{-d}\HH(\ca)$ in $\cd(k)$. The last statement follows from the fact that a morphism $\Si^p \RHom_{\ca^e_R}(\ca, \ca^e_R) \to \ca$ in $\cd(\ca^e_R)$ is an isomorphism if and only if its underlying morphism in $\cd(\ca^e)$ is an isomorphism for all integers $p$.
\end{proof}

Now the preceding observations yield the following result which a non-commutative analogue of Proposition~5.12 of \cite{BravDyckerhoff19}.

\begin{theorem} \label{thm:left CY structure}
Let $k$ be a perfect field and $R$ a finitely generated commutative Gorenstein $k$-algebra of Krull dimension $d$. Let $\La$ be a symmetric $R$-order. Then the dg category $\cd^b_{dg}(\mod \La)$ carries a left $d$-Calabi--Yau structure over $k$.
\end{theorem}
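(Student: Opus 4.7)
The plan is to cascade the two ingredients: Proposition~\ref{prop:left CY structure} supplies a left $0$-Calabi--Yau structure over $R$ on $\ca := \cd^b_{dg}(\mod \La)$, and Proposition~\ref{prop:left base change} transfers it to a left $d$-Calabi--Yau structure over $k$. First I verify the hypotheses of Proposition~\ref{prop:left CY structure}: since $R$ is finitely generated over a field it is J-2, and Gorenstein by assumption; combined with $\La$ being a symmetric $R$-order, the proposition produces a class $[\tilde\xi]\in \HN_0(\ca/R)$ whose image in $\HH_0(\ca/R)$ is non-degenerate.

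Next I set up the base change. By Proposition~\ref{prop:Koszul duality}, $\ca$ is smooth as a dg $R$-category. The remaining input to Proposition~\ref{prop:left base change} is that $R$ itself is a left $d$-Calabi--Yau $k$-algebra. Under the hypothesis that $k$ is perfect and $R$ is finitely generated Gorenstein of Krull dimension $d$, Grothendieck duality provides the isomorphism $\Si^d R^\vee \simeq R$ in $\cd(R^e)$, and the associated fundamental class lifts from $\HH_d(R)$ to $\HN_d(R)$ by a degree-vanishing argument (e.g.\ Lemma~5.10(1) of \cite{BravDyckerhoff19}). Proposition~\ref{prop:left base change} then yields an isomorphism $\HH(\ca/R) \simeq \Si^{-d}\HH(\ca)$ in $\cd(k)$ preserving non-degeneracy, sending $[\xi]$ to a non-degenerate class $[\xi'] \in \HH_d(\ca)$.

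To lift $[\xi']$ to $\HN_d(\ca)$ and thereby complete the left $d$-Calabi--Yau structure over $k$, I repeat the argument in the proof of Proposition~\ref{prop:left CY structure}: $\ca$ is smooth over $k$ as asserted by Proposition~\ref{prop:left base change}, and the vanishing $\HH_p(\ca) = 0$ for $p > d$ is inherited from $\HH_p(\ca/R) = 0$ for $p > 0$ via the base-change isomorphism, so by Lemma~5.10(1) of \cite{BravDyckerhoff19} the canonical map $\HN_d(\ca) \to \HH_d(\ca)$ is bijective on the relevant class. I expect the most delicate step to be the verification that $R$ carries a left $d$-Calabi--Yau structure over $k$, as this is the bridge where the finite-generation-over-perfect-field hypotheses on $R$ must be brought to bear on the bimodule $R^\vee$ in an essential way.
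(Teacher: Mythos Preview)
There is a genuine gap. Proposition~\ref{prop:left base change} requires $R$ to be a left $d$-Calabi--Yau $k$-algebra, and by the paper's definition this entails that $R$ is homologically smooth over $k$, i.e.\ perfect over $R^e = R \otimes_k R$; indeed the proof of that proposition explicitly uses regularity of $R$. For a finitely generated commutative algebra over a perfect field, homological smoothness forces $R$ to be a regular ring. A Gorenstein ring that is not regular---take $R = k[x]/(x^2)$ in Krull dimension $0$---has nonvanishing Hochschild homology $\HH_p(R/k)$ in every degree $p \geq 0$, so is not perfect over $R^e$ and cannot carry a left Calabi--Yau structure. Your appeal to Grothendieck duality does not help here: it concerns the functor $\RHom_R(-, R)$, not the inverse dualizing bimodule $R^\vee = \RHom_{R^e}(R, R^e)$, and does not produce the isomorphism $\Si^d R^\vee \simeq R$ you assert.

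The paper repairs this by first replacing $R$ with its Noether normalization $S$, a polynomial $k$-algebra in $d$ variables and hence genuinely left $d$-Calabi--Yau over $k$. The key observation enabling this reduction is that a Gorenstein ring is a symmetric order over its Noether normalization, so $\La$, being a symmetric $R$-order, is automatically a symmetric $S$-order as well; one then runs the whole argument with $S$ in place of $R$. Apart from this missing reduction step, your overall architecture---Proposition~\ref{prop:left CY structure} for the non-degenerate Hochschild class over the base ring, Proposition~\ref{prop:left base change} for the transfer to $k$, and a degree-vanishing argument via Lemma~5.10(1) of \cite{BravDyckerhoff19} to lift to $\HN_d$---matches the paper's. (For the vanishing $\HH_p(\ca) = 0$ for $p > d$, the paper argues directly via Lemma~\ref{lem:full faithfulness} rather than through the base-change isomorphism, but once $R$ is replaced by $S$ your route also works.)
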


\begin{proof}
Denote the dg category $\cd^b_{dg}(\mod \La)$ by $\ca$. By Theorem~5.1 of~\cite{ElaginLuntsSchnuerer20}, it is smooth as a dg $k$-category. Since the commutative ring $R$ is Gorenstein, it is a symmetric order over its Noether normalization and hence so is $\La$. Therefore, after possibly replacing $R$ its Noether normalization, we may and will assume that it is the polynomial algebra in $d$ variables. In particular, it is left $d$-Calabi--Yau. By Proposition~\ref{prop:left CY structure}, the dg $R$-category $\ca$ is smooth and we have a non-degenerate class $[\xi]$ in $\HH_0(\ca/R)$. By Proposition~\ref{prop:left base change}, it yields a non-degenerate class $[\xi']$ in $\HH_d(\ca)$. By Lemma~\ref{lem:full faithfulness}, the Hochschild homology $\HH_p(\ca)\iso \Hom_{\cd(\La^e)}(\La, \Si^{d-p}\La)$ vanishes for all degrees $p>d$. Then by part~(1) of Lemma~5.10 of \cite{BravDyckerhoff19}, the canonical map $\HN_d(\ca) \to \HH_d(\ca)$ is bijective. Therefore, the preimage of $[\xi']$ under this bijection gives rise to a left $d$-Calabi--Yau structure over $k$ on the dg category $\cd^b_{dg}(\mod \La)$.
\end{proof}

\begin{corollary} \label{cor:left CY structure}
Under the assumption of Theorem~\ref{thm:left CY structure}, the dg category $\sg\!_{dg}\La$ carries a left $d$-Calabi--Yau structure over $k$.
\end{corollary}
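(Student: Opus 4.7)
The strategy is to push the left $d$-Calabi--Yau class on $\ca := \cd^b_{dg}(\mod \La)$ provided by Theorem~\ref{thm:left CY structure} forward along the canonical dg Verdier quotient functor $Q \colon \ca \to \cc := \sg\!_{dg}\La$, and verify that the resulting class remains non-degenerate. Concretely, let $[\tilde{\xi}] \in \HN_d(\ca)$ denote the given left $d$-Calabi--Yau class, with Hochschild image $[\xi] \in \HH_d(\ca)$ inducing an isomorphism $\phi \colon \Si^d \ca^\vee \iso \ca$ in $\cd(\ca^e)$. Setting $[\tilde{\zeta}] := Q_*[\tilde{\xi}] \in \HN_d(\cc)$ and $[\zeta] := Q_*[\xi] \in \HH_d(\cc)$, the goal will be to show that $[\tilde{\zeta}]$ defines a left $d$-Calabi--Yau structure on $\cc$ over $k$.

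The first step is to verify that $\cc$ is smooth as a dg $k$-category; this is built into the very definition of a left Calabi--Yau structure. Smoothness of $\ca$ over $k$ is already established in the proof of Theorem~\ref{thm:left CY structure} via \cite{ElaginLuntsSchnuerer20}. I would then deduce smoothness of $\cc$ from this together with the structure of the dg Verdier quotient $\ca / \per\!_{dg}\La$ under the given Gorenstein and finiteness hypotheses on $R$ and $\La$. A promising route is to produce a compact generator of $H^0(\cc)$ and run a Koszul duality argument in the spirit of Proposition~\ref{prop:Koszul duality}, showing that $\cc$ is derived Morita equivalent to a smooth dg $k$-algebra.

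With smoothness of both $\ca$ and $\cc$ in hand, the non-degeneracy of $[\zeta]$ follows from a formal base change argument. Since $\ca$ is smooth, $\ca^\vee$ is perfect in $\cd(\ca^e)$, so the formation of $\RHom_{\ca^e}(\ca, \ca^e)$ commutes with the pushforward $\mathrm{L}Q^{e*}$ along $Q^e \colon \ca^e \to \cc^e$. Using $\mathrm{L}Q^{e*}\ca \simeq \cc$ and $\mathrm{L}Q^{e*}\ca^e \simeq \cc^e$, this yields a canonical isomorphism $\mathrm{L}Q^{e*}(\ca^\vee) \iso \cc^\vee$ in $\cd(\cc^e)$. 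Under this identification, the morphism $\Si^d \cc^\vee \to \cc$ attached to $[\zeta]$ is identified with $\mathrm{L}Q^{e*}(\phi)$, which is an isomorphism since $\mathrm{L}Q^{e*}$ is a triangle functor. Hence $[\zeta]$ is non-degenerate, and $[\tilde{\zeta}]$ is the desired left $d$-Calabi--Yau structure on $\cc$.

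The main obstacle I expect is the smoothness of $\cc$ as a dg $k$-category: once smoothness is in place, the pushforward and non-degeneracy step is essentially formal. In contrast to the right Calabi--Yau setting, where Theorem~\ref{thm:A} delivers a non-degeneracy criterion on Verdier quotients without a separate smoothness requirement, left Calabi--Yau structures demand smoothness as a prerequisite, so it must be verified using the specific features of dg singularity categories of symmetric orders.
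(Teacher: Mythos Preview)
Your approach is essentially the paper's: push the left $d$-Calabi--Yau class forward along the localization $Q\colon \ca \to \cc$. The paper dispatches the whole thing in one line by citing part~(d) of Proposition~3.10 of \cite{Keller11b}, which states precisely that left Calabi--Yau structures are inherited by dg quotients.

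Your concern about smoothness of $\cc$ being the hard step is misplaced, and the Koszul-duality route you sketch is unnecessary. Smoothness of $\cc$ follows formally from smoothness of $\ca$ by the very same $\mathrm{L}Q^{e*}$ argument you use for non-degeneracy: since $Q$ is a dg localization, so is $Q^e$, hence $\mathrm{L}Q^{e*}(\ca) \simeq \cc \lten_\ca \ca \lten_\ca \cc \simeq \cc \lten_\ca \cc \simeq \cc$ in $\cd(\cc^e)$; as $\ca$ is perfect in $\cd(\ca^e)$ and $\mathrm{L}Q^{e*}$ preserves perfect objects, $\cc$ is perfect in $\cd(\cc^e)$. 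No special features of singularity categories are required. This is exactly what Keller's cited result packages, together with the base-change identification $\mathrm{L}Q^{e*}(\ca^\vee) \simeq \cc^\vee$ that you already spelled out correctly.
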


\begin{proof}
Since the dg category $\sg\!_{dg}\La$ is the dg quotient of $\cd^b_{dg}(\mod \La)$ by $\per\!_{dg}\La$, the statement follows from Theorem~\ref{thm:left CY structure} and part~(d) of Propsoition~3.10 of \cite{Keller11b}.
\end{proof}



\def\cprime{$'$} \def\cprime{$'$}
\providecommand{\bysame}{\leavevmode\hbox to3em{\hrulefill}\thinspace}
\providecommand{\MR}{\relax\ifhmode\unskip\space\fi MR }
\providecommand{\MRhref}[2]{%
  \href{http://www.ams.org/mathscinet-getitem?mr=#1}{#2}
}
\providecommand{\href}[2]{#2}

\end{document}